\numberwithin{equation}{section}
\theoremstyle{plain}
\newtheorem{thm}{Theorem}[section]
\newtheorem{prop}[thm]{Proposition}
\newtheorem{cor}[thm]{Corollary}
\newtheorem{lem}[thm]{Lemma}
\theoremstyle{definition}
\newtheorem{exa}[thm]{Example}
\newtheorem{rem}[thm]{Remark}
\newtheorem{defi}[thm]{Definition}
\DeclareMathOperator*{\real}{\mathbb{R}}
\DeclareMathOperator*{\comp}{\mathbb{C}}
\DeclareMathOperator*{\nat}{\mathbb{N}}
\newcommand{\nc}{\mathcal{NC}(n)}
\newcommand{\mpn}{\mathcal{M}(n)}
\newcommand{\lpn}{\mathcal{LP}(n)}
\newcommand{\cseries}{\comp \llbracket z_1,\cdots,z_r \rrbracket}
\newcommand{\I}{\text{\normalfont Id}}
\newcommand{\mulb}{\text{\normalfont Mul}^r \llbracket \mathfrak{B} \rrbracket}
\newcommand{\smiley}{\mathop{\kern0.25em\buildrel{.\,.}\over{\rule{0em}{0.2em}} \kern-1.05em\bigcirc \kern-0.73em\scriptscriptstyle\smallsmile\kern0.3em}}
\begin{document}
\title{On operator-valued monotone independence}
\author{Takahiro Hasebe\footnote{Graduate School of Science,  Kyoto University,  Kyoto 606-8502, Japan. 
Email: thasebe@math.kyoto-u.ac.jp, Tel: 075-753-3700, Fax: 075-753-3711. Current address is: Laboratoire de Math\'ematiques, Universit\'e de Franche-Comt\'e, 25030 Besan\c{c}on cedex, France.}   and  Hayato 
Saigo\footnote{Nagahama Institute of Bio-Science and Technology,  Nagahama 526-0829, Japan. Email: h\_saigoh@nagahama-i-bio.ac.jp, Tel: 0749-64-8171, Fax: 0749-64-8140} 
}
\date{}

\maketitle

\begin{abstract}
We investigate operator-valued monotone independence, a noncommutative version of independence for conditional expectation. First we introduce operator-valued monotone cumulants 
to clarify the whole theory and show the moment-cumulant formula. As an application, one can obtain an easy proof of Central Limit Theorem for operator-valued case. 
Moreover, we prove a generalization of Muraki's formula for the sum of independent random variables and a relation between generating functions of moments and cumulants.\footnote{Mathematics Subject Classification 2010: Primary 46L53; Secondary 46L54, 13F25, 06A07}
\end{abstract}

\section{Introduction}
Noncommutative probability is an algebraic generalization of (Kolmogorovian) probability theory and quantum theory.
A noncommutative probability space is a pair of a $\ast$-algebra $\mathcal{A}$ and a state $\varphi$ on $\mathcal{A}$ 
(i.e.\ a linear functional $\varphi:\mathcal{A} \to \comp$ with the positivity $\varphi(a^\ast a) \geq 0$ for $a \in \mathcal{A}$.) 
Given a probability space $(\Omega, \mathcal{F},P)$, we can associate a pair of a commutative $\ast$-algebra $L^\infty(\Omega,\mathcal{F}, P)$ and a state $E$ which is an expectation regarding $P$. 
In quantum theory, $\mathcal{A}$ is called ``observable algebra''. Noncommutativity of the algebra implies many important physical consequences such as uncertainty principle.

One of the most striking features of noncommutative probability is that the concept of independence is not unique. In addition to the usual notion of independence in probability theory (``tensor independence''), 
free, Boolean and monotone independences were introduced in \cite{V1}, \cite{S-W} and \cite{Mur3}, respectively. 
These four notions can be characterized by natural properties~\cite{Mur4}. 

Moreover, we can also consider a noncommutative version of conditional independence, replacing a state with a conditional expectation taking values in a possibly noncommutative algebra. 
This kind of independence is called operator-valued independence. For instance, the reader is referred to \cite{Dyk1,Spe3,V3} for the free case, \cite{Leh1,Popa2} for the Boolean case and \cite{Popa,Ske} for the monotone case. 
 
Among the four independences, monotone independence is in particular difficult to treat since the concept of ``mutual independence'' fails to hold. More precisely, random variables $Y$ and $X$ may not be 
independent even if $X$ and $Y$ are independent. So we have to distinguish $(X,Y)$ and $(Y,X)$.  On the other hand, monotone independence plays crucial roles in some situations. 
For instance, monotone independence is essential to understand a relation between a free L\'{e}vy process and a classical Markov process, as shown by Biane~\cite{Biane} implicitly 
(Franz pointed out this relation explicitly in \cite{Fra3}).

In this paper, we develop a theory on operator-valued monotone independence for multivariate random variables, generalizing results of Popa~\cite{Popa} for a single random variable. We  
extend the Muraki formula, which describes the moments of the sum of independent random variables, to the operator-valued setting.  

First we define the notion of an operator-valued version of ``generalized cumulants'' to clarify the whole theory and prove the
 moment-cumulant formula, following the idea of \cite{H-S2}.  We apply this to the central limit theorem, to obtain a new expression of the limit distribution. 
 Then we investigate generating functions of moments and cumulants. To this end, we extend the algebraic structure of the ring of multivariate formal power series, focusing on the coefficients of series. Finally we prove 
the extension of the Muraki formula and a differential equation involving moments and cumulants.

\section{Operator-valued monotone independence}
\subsection{Preliminary concepts}
Involutions on algebras are not essential in the scope of this paper, so we do not consider them below. 
In this paper, $\mathfrak{B}$ denotes a unital algebra and $\mathcal{A}$ a unital algebra containing $\mathfrak{B}$ as a subalgebra. We assume that the unit of $\mathfrak{B}$ coincides with that of $\mathcal{A}$. In this paper, algebras can be considered over any commutative field such as $\real$ and $\comp$. We say that $\mathcal{C}$ is a subalgebra of $\mathcal{A}$ over $\mathfrak{B}$ if $\mathcal{C}$ is a subalgebra of $\mathcal{A}$ and $bc \in \mathcal{C}$ for all $b \in \mathfrak{B}$ and $c \in \mathcal{C}$. A subalgebra of $\mathcal{A}$ over $\mathfrak{B}$ may not contain the unit of $\mathcal{A}$. 

For $X_1,\cdots,X_r \in \mathcal{A}$, let $\mathfrak{B}\langle X_1,\cdots,X_r \rangle_0$ denote the subalgebra of $\mathcal{A}$ over $\mathfrak{B}$ consisting of finite sums of elements of $\{b_1X_{i_1}b_2\cdots X_{i_n}b_{n+1}: b_i \in \mathfrak{B}, n \geq 1, i_1,\cdots,i_n \in \{1,\cdots, r \}\}$.  Note that, in general, $\mathfrak{B}$ is not contained in $\mathfrak{B}\langle X_1,\cdots,X_r \rangle_0$. 

Let $\mathfrak{D}$ be another unital algebra containing $\mathfrak{B}$ as a subalgebra. 
A map $f$ from $\mathcal{A}$ to $\mathfrak{D}$ is called $\mathfrak{B}$-linear if $f(b_1 x b_2 + y) = b_1 f(x) b_2 + f(y)$ for all  $b_1,b_2 \in \mathfrak{B}$ and $x,y \in \mathcal{A}$.  A $\mathfrak{B}$-linear map $h$ is called a $\mathfrak{B}$-homomorphism if $h(xy) = h(x)h(y)$ for any $x,y \in \mathcal{A}$. 
A $\mathfrak{B}$-linear map $\varphi$ with values in $\mathfrak{B}$ is called a \textit{conditional expectation} if $\varphi(b) = b$ for $b \in \mathfrak{B}$. From now on we assume that $\varphi$ is a conditional expectation in the above sense. A triple $(\mathcal{A},\mathfrak{B},\varphi)$ is called an \textit{algebraic probability space} or a \textit{noncommutative probability space}, as in the case $\mathfrak{B}=\comp$.

A \textit{random variable} is an element of $\mathcal{A}$ and a \textit{random vector} or vector-valued random variable is an element of $\mathcal{A}^r$ for an $r \geq 1$.

The concept of (joint) moments has to be generalized, since $\varphi(X_{i_1}\cdots X_{i_n})$ $(1\leq i_1,\cdots, i_n \leq r)$ are not sufficient to know the information on a conditional expectation. 

\begin{defi}
Let $X=(X_1,\cdots, X_r)$ be a random vector and $i_1,\cdots, i_{n} \in \{1,\cdots, r\}$ for $n \geq 1$. The multilinear functional $\mu^X_{i_1,\cdots,i_{n}}$ defined by 
\[
\mu^X _{i_1,\cdots,i_{n}}(b_1, \cdots, b_{n}) = \varphi(b_1X_{i_1}b_2\cdots b_nX_{i_{n}}) 
\]
is called an $(i_1,\cdots, i_{n})$-moment of $X$. 
\end{defi}

The monotone independence over $\mathfrak{B}$ was introduced by Skeide~\cite{Ske}. 
\begin{defi} 
Let $\Lambda$ be an index set equipped with a linear order $<$. 
A family of subalgebras $\left(\mathcal{A}_{\lambda} \right)_{\lambda \in \Lambda}$ over $\mathfrak{B}$ is said to be \textit{monotone independent over $\mathfrak{B}$} if
\[
\varphi (X_1 \cdots X_n)= \varphi(X_1 \cdots X_{i-1}\varphi(X_i) X_{i+1} \cdots X_n)
\]
holds for any $X_i \in \mathcal{A}_{\lambda_i}$ whenever $i$ satisfies $\lambda _{i-1}<\lambda _i$ and $\lambda _i>\lambda _{i+1}$ (one of the inequalities is eliminated when $i=1$ or $i=n$). Independence for random vectors $X_\lambda=(X_{\lambda,1} \cdots, X_{\lambda,k_\lambda})$, $\lambda \in \Lambda$ is defined by considering the subalgebras $\mathcal{A}_\lambda:=\mathfrak{B}\langle X_{\lambda,1},\cdots, X_{\lambda,k_\lambda} \rangle_0$.  
\end{defi} 

Let $\Lambda$ be a linearly ordered set and $r \in \nat,r\geq 1$. Random vectors $ X_\lambda=(X_{\lambda,1}, \cdots,X_{\lambda,r})$, $\lambda \in \Lambda$ are said to be \textit{monotone i.i.d.\ (independent, identically distributed)} if they are monotone independent over $\mathfrak{B}$ and $\mu^{X_\lambda}_{i_1,\cdots,i_n}$ does not depend on $\lambda \in \Lambda$ for any $i_1, \cdots, i_n \in \{1,\cdots,r \}$ and $n \geq 1$.

\subsection{Dot operation}
 We introduce a dot operation, following the paper~\cite{H-S2}.   
\begin{defi} \label{dot}
For every $X \in \mathcal{A}$, let us take copies $\{X ^{(j)}\}_{j \geq 1}$ in an algebraic probability space $(\widetilde{\mathcal{A}}, \mathfrak{B}, \widetilde{\varphi})$ such that: 
\begin{enumerate}[\rm(1)]
\item $X \mapsto X^{(j)}$ is a $\mathfrak{B}$-homomorphism for each $j$; 

\item $\widetilde{\varphi}(X_1^{(j)} X_2^{(j)} \cdots X_n^{(j)}) = \varphi(X_1X_2\cdots X_n)$ for any $X_i \in \mathcal{A}$, $j, n \geq 1$; 

\item the subalgebras $\mathcal{A}^{(j)}:=\{X ^{(j)}\}_{X \in \mathcal{A}}$ are monotone independent over $\mathfrak{B}$.  
\end{enumerate}
We define a \textit{dot operation} $N.X$ as follows: 
\[
N.X = X^{(1)} + \cdots + X^{(N)}
\]
for $X \in \mathcal{A}$ and a natural number $N \geq 0$. We understand that $0.X=0$. 
The dot operation can be extended to random vectors: 
\[
N.X:=(X_1^{(1)}+\cdots+X_1^{(N)},\cdots,X_r^{(1)}+\cdots+X_r^{(N)})
\] 
for $X=(X_1,\cdots,X_r)$. 
We can iterate the dot operation more than once in a suitable space. For instance, the symbol $N.(M.X)$ means the sum $(M.X)^{(1)} + \cdots + (M.X)^{(N)}$ of monotone i.i.d.\ random variables $(M.X)^{(k)}$, $k =1,2,3,\cdots$.  
\end{defi}
For simplicity, $\widetilde{\varphi}$ is denoted by the same symbol $\varphi$ in this paper.   
The above dot operation can be realized in a canonical way in terms of a free product with amalgamation. The construction is similar to the $\comp$-valued case~\cite{H-S2}, so that we do not repeat it. 

An essential property of the dot operation is ``associativity up to a state'', described by the proposition below.

\begin{prop}\label{prop5} 
For random variables $X_1,\cdots,X_n$,  
\[
\varphi\left( ((NM).X_1)\cdots ((NM).X_n) \right) = \varphi\left( (N.(M.X_1))\cdots (N.(M.X_n)) \right).  
\]    
\end{prop}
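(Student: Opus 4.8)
The plan is to expand both sides by $\mathfrak{B}$-multilinearity into sums of elementary moments and then to match these moments term by term through an order isomorphism of the index sets. Writing $(NM).X_k=\sum_{j=1}^{NM}X_k^{(j)}$ and $N.(M.X_k)=\sum_{a=1}^{N}\sum_{b=1}^{M}X_k^{(a,b)}$, where $X_k^{(a,b)}$ denotes the copy of $X_k$ sitting in the $b$-th inner copy inside the $a$-th outer copy, the two sides become
\[
\sum_{j_1,\dots,j_n=1}^{NM}\varphi\!\left(X_1^{(j_1)}\cdots X_n^{(j_n)}\right)
\quad\text{and}\quad
\sum_{(a_1,b_1),\dots,(a_n,b_n)}\varphi\!\left(X_1^{(a_1,b_1)}\cdots X_n^{(a_n,b_n)}\right),
\]
the second sum running over $(\{1,\dots,N\}\times\{1,\dots,M\})^{n}$. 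Here the left family $(\mathcal{A}^{(j)})_{j}$ consists of monotone i.i.d.\ copies of $X$ over $(\{1,\dots,NM\},<)$ by construction, and the copy maps $X_k\mapsto X_k^{(a,b)}$ are $\mathfrak{B}$-homomorphisms preserving $\varphi$, so the two families have identical individual distributions.

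The main step, which I expect to be the principal obstacle, is to show that the doubly indexed family $(\mathcal{A}^{(a,b)})$ produced by $N.(M.X)$ is monotone independent over $\mathfrak{B}$ with respect to the lexicographic order, with $a$ the more significant coordinate; this is an associativity property of monotone independence. By construction the outer copies $\mathcal{C}_a:=\mathcal{A}^{(a)}$, each generated over $\mathfrak{B}$ by $\{\mathcal{A}^{(a,b)}\}_b$, are monotone independent in the order of $a$, while inside each $\mathcal{C}_a$ the inner copies $\mathcal{A}^{(a,b)}$ are monotone independent in the order of $b$. To verify the defining identity at a lexicographic peak in a product $Y_1\cdots Y_n$ with $Y_k\in\mathcal{A}^{(a_k,b_k)}$, I would separate the case of a strict outer peak $a_{i-1}<a_i>a_{i+1}$, where the reduction $\varphi(\cdots)=\varphi(\cdots\varphi(Y_i)\cdots)$ is immediate from outer monotone independence applied with the block labels $a_k$, from the mixed cases in which $Y_i$ shares its block with a neighbor. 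The delicate point is these mixed cases: I would argue by induction on $n$, using the outer rule to merge maximal runs of consecutive same-block factors and to peel off the factors lying in the strictly highest block $\bar a=\max_k a_k$, each such maximal run being replaced by its inner conditional expectation in $\mathfrak{B}$ computed through inner monotone independence. The confluence of this reduction—that its final $\mathfrak{B}$-value is independent of the order in which peaks are eliminated—is precisely what guarantees that the lexicographic monotone identity holds.

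Granting this associativity, both families are monotone i.i.d.\ copies of $X$ over linearly ordered index sets with the same individual distribution, and a routine induction on $n$ via the monotone reduction rule shows that for any such family the moment $\varphi(X_{i_1}^{(\ell_1)}\cdots X_{i_n}^{(\ell_n)})$ is a universal function of the order type of $(\ell_1,\dots,\ell_n)$—the pattern of coincidences together with the relative order of the labels—not depending on the particular ordered index set. It then remains to invoke the order isomorphism $\sigma\colon(\{1,\dots,N\}\times\{1,\dots,M\},<_{\mathrm{lex}})\to(\{1,\dots,NM\},<)$ given by $\sigma(a,b)=(a-1)M+b$. Applying $\sigma$ coordinatewise yields a bijection from $(\{1,\dots,N\}\times\{1,\dots,M\})^{n}$ onto $\{1,\dots,NM\}^{n}$ that preserves order types, so the two expanded sums coincide term by term, which is exactly the asserted identity.
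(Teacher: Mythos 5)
Your overall strategy is sound and, in substance, it is the route the paper itself takes: the paper disposes of this proposition in one line by deferring to the argument of \cite{H-S2}, which rests on exactly the fact you isolate as the main step, namely the associativity of monotone independence (the lexicographically ordered, doubly indexed family of copies is again monotone independent over $\mathfrak{B}$). Your surrounding steps are correct and routine: the expansion of both sides by $\mathfrak{B}$-multilinearity, the transfer of inner independence through the moment-preserving $\mathfrak{B}$-homomorphisms, the observation that mixed moments of a monotone independent family are determined by the order type of the label tuple together with the marginal distributions (proved by merging equal adjacent labels and collapsing a global-maximum peak, inducting on length), and the order isomorphism $(a,b)\mapsto (a-1)M+b$.

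The one place where your argument does not hold up as written is the justification of the key lemma by ``confluence.'' Confluence of the reduction is both automatic and beside the point: every application of the outer or inner rule is an identity for $\varphi$, so all reduction paths starting from a fixed word $W=Y_1\cdots Y_n$ trivially produce the same element $\varphi(W)$ of $\mathfrak{B}$. That cannot by itself prove the lexicographic peak identity, which compares $\varphi$ on two \emph{different} words, $W$ and the word $W_i$ obtained by replacing $Y_i$ with $\varphi(Y_i)$. What closes the argument is the induction you gesture at, organized as follows. Put $\bar a=\max_k a_k$. If $a_i=\bar a$, the maximal run $R$ of consecutive factors with outer label $\bar a$ containing $Y_i$ sits at an outer peak in both $W$ and $W_i$ (inserting $\varphi(Y_i)\in\mathfrak{B}$ changes no outer label), so the outer rule collapses both words to the same expression except that $\varphi(R)$ appears in one and $\varphi(R_i)$ in the other; since the lex-peak condition makes $i$ an inner peak of the run $R$, inner monotone independence gives $\varphi(R)=\varphi(R_i)$, and you are done. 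If $a_i<\bar a$, note that a lex peak can never be adjacent to a factor of strictly larger outer label, so no maximal $\bar a$-run meets the positions $i-1,i,i+1$; collapsing one such run (its $\mathfrak{B}$-value being absorbed into a neighbouring factor, whose label is unchanged) turns $W$ and $W_i$ into shorter words related in the same way, and the inductive hypothesis applies. With this replacement of the confluence claim, your proof is complete, and it amounts to a self-contained verification of the associativity statement that the paper instead imports from \cite{H-S2} and the amalgamated free product realization of the dot operation.
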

The proof is quite similar to that of \cite{H-S2}.

\section{Monotone cumulants}
Let us introduce terminologies and notations regarding partitions of a set. The following definitions of partitions and ordered partitions are possible on any linearly ordered set, but we only consider the set $\underline{n}:= \{1,\cdots,n\}$ for simplicity. $\pi$ is said to be a \textit{partition} of $\underline{n}$ if $\pi=\{V_1,\cdots,V_k\}$, where $V_i$ are non-empty, disjoint subsets of $\underline{n}$ and $\cup_{i=1}^k V_i = \underline{n}$. The number $k$ is denoted as $|\pi|$ and an element $V$ of $\pi$ is called a block. A partition $\pi$ is said to be \textit{crossing} if blocks $V,W \in \pi$ exist so that there are elements $a,c \in V$ and $b,d \in W$ satisfying $a<b<c<d$. $\pi$ is said to be \textit{non-crossing} if it is not crossing. The set of the non-crossing partitions of $\underline{n}$ is denoted as $\nc$. A block $V$ of a partition is called an \textit{interval block} if $V$ is of the form $V=\{k,k+1,\cdots, k+l\}$ for $k \geq 1$ and $0 \leq l \leq n-k$. 
The set of the interval blocks is denoted by $IB(n)$. 
A partial order can be defined for partitions. For partitions $\pi$ and $\sigma$, the relation $\pi \leq \sigma$ means that for any block $V \in \pi$, there exists a block $W \in \sigma$ such that $V \subset W$. For instance, the partition consisting of one block $\{1,\cdots,n\}$ is larger than  any other partition.  

In addition to partitions, we need ordered partitions in this paper. An \textit{ordered partition} $\pi$ of $\underline{n}$ is a sequence $\pi=(V_1,\cdots, V_k)$, where $\{V_1,\cdots,V_k \}$ is a partition of $\underline{n}$. The number $k$ is also denoted as $|\pi|$. 
Let us denote by $\lpn$ the set of the ordered partitions of $\underline{n}$. 

We introduce a partial order on blocks in a partition $\pi \in \nc$ as follows: For $V,W \in \pi$ we denote $V \succ W$ if there are $i,j \in W$ such that $i < k < j$ for all $k \in V$. 
Visually, $V \succ W$ means that $V$ lies in the inner side of $W$. For instance, $\{4,5,6 \} \succ \{3,8,10 \}$ in Fig.~\ref{dia1}. We assume that the relation $\succ$ does not include the equality: $V \succ W$ implies $V \neq W$ in this paper. 
A \textit{monotone partition} of $\underline{n}$ is an ordered partition $\pi =(V_1, \cdots, V_k) \in \mathcal{LP}(n)$ which satisfies the following properties: 
\begin{enumerate}[\rm(1)] 
\item $\{V_1,\cdots,V_k\} \in \nc$, 
\item If $V_i \succ V_j$, then $i > j$.  
\end{enumerate}
The set of monotone partitions of $\underline{n}$ is denoted by $\mpn$. 

\begin{figure}
\centering
\includegraphics[width=8cm,clip]{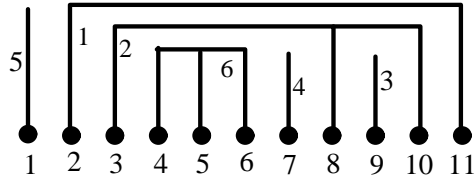}
\caption{A monotone partition $(\{2, 11 \}, \{3, 8, 10 \}, \{9 \}, \{7 \}, \{1 \}, \{4, 5, 6 \})$. Each block is labeled by a number to clarify the order on the blocks.}
\label{dia1}
\end{figure}

The following definition was used by Speicher \cite{Spe3} to describe the moment-cumulant formula for the case of free probability with amalgamation. 
Let $A_n$ be a multilinear functional from $\mathcal{A}^n$ to $\mathfrak{B}$ for $n \geq 1$. A multilinear functional $A_\pi$ for a non-crossing partition $\pi$ of $\underline{n}$ is defined by the recursive relation
\[
A_\pi(X_1,\cdots,X_n)=A_\sigma (X_1,\cdots, X_{k-1}, A_{m+1}(X_k,\cdots,X_{k+m})X_{k+m+1},\cdots,X_n),  
\]
where $V =\{k,\cdots,k+m\}$ is an interval block of $\pi$ and $\sigma$ denotes the non-crossing partition $\pi \backslash \{V\}$ of $\underline{n}\backslash V$.\footnote{While we have not defined partitions of an arbitrary linearly ordered set, a non-crossing partition of $\underline{n}\backslash V$ can be naturally defined by using the linear order structure of $\underline{n}\backslash V$.}   If $\pi=(V_1,\cdots,V_k)$ is an ordered partition such that $\{V_1,\cdots, V_k\} \in \nc$, then we define $A_\pi$ in the above way, neglecting the order structure of $\pi$.

In this paper, $\varphi_\pi(X_1,\cdots,X_n)$ always denotes the above construction arising from the multilinear functionals  
$\varphi_n(X_1,\cdots,X_n) :=\varphi(X_1 \cdots X_n)$.  

The following result is useful to understand an interplay among $\lpn$, $\nc$ and monotone independence. Therefore a detailed proof is presented.  
\begin{lem}\label{lem32}
For each non-crossing partition $\pi$, there exists a polynomial $a_\pi(x)$ which does not contain a constant term such that 
\[
\varphi\Big((N.X_1)\cdots (N.X_n)\Big)= \sum_{\pi \in \mathcal{NC}(n)} a_\pi(N)\varphi_\pi(X_1,\cdots, X_n)  
\]
for all $X_1,\cdots,X_n$ and $N \in \nat$. 
\end{lem}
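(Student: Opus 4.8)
The plan is to expand everything in terms of the independent copies, reduce each resulting term by monotone independence to a non-crossing-partition functional, and then count how many terms produce each partition. First I would use the multilinearity of $\varphi$ together with $N.X_i = X_i^{(1)} + \cdots + X_i^{(N)}$ to write
\[
\varphi\big((N.X_1)\cdots(N.X_n)\big) = \sum_{j} \varphi\big(X_1^{(j(1))}\cdots X_n^{(j(n))}\big),
\]
the sum running over all functions $j \colon \underline{n} \to \{1,\ldots,N\}$.

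The heart of the argument is the evaluation of each summand. To a function $j$ I associate the partition $\pi(j)$ whose blocks are the classes of the relation ``$a \sim b$ iff $j(a)=j(b)$ and $j(i) \geq j(a)$ for every $i$ between $a$ and $b$'', which one checks is an equivalence relation giving a non-crossing partition, and which depends on $j$ only through the ordered partition $O(j) \in \lpn$ obtained by grouping the positions according to the value of $j$ and ordering the groups by increasing value. I would then prove the key identity
\[
\varphi\big(X_1^{(j(1))}\cdots X_n^{(j(n))}\big) = \varphi_{\pi(j)}(X_1,\ldots,X_n)
\]
by induction on $n$. The positions carrying the maximal value of $j$ form maximal runs of consecutive indices; each such run is an interval block of $\pi(j)$ sitting at the innermost level, its product lies in a single copy $\mathcal{A}^{(M)}$, and both of its neighbours carry strictly smaller labels, so the defining relation of monotone independence lets me replace the run by its $\varphi$-image in $\mathfrak{B}$ and absorb that factor into a neighbouring variable. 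Each such replacement is precisely one step of the recursive interval-block relation defining $\varphi_{\pi(j)}$, and removing all maximal runs leaves a function $j'$ with strictly fewer distinct values, to which the inductive hypothesis applies.

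Finally I would regroup the sum according to the value of $\pi(j)$. Writing $\pi(O)$ for the common value of $\pi(j)$ over all $j$ with $O(j)=O$, this gives
\[
\varphi\big((N.X_1)\cdots(N.X_n)\big) = \sum_{\pi \in \nc} \Big(\sum_{\substack{O \in \lpn \\ \pi(O)=\pi}} \#\{j : O(j)=O\}\Big)\varphi_\pi(X_1,\ldots,X_n).
\]
For a fixed $O$ with $m$ parts, the condition $O(j)=O$ amounts to choosing $m$ distinct values in $\{1,\ldots,N\}$ and assigning them to the parts in increasing order, so $\#\{j : O(j)=O\} = \binom{N}{m}$. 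Thus $a_\pi(N) = \sum_{O : \pi(O)=\pi}\binom{N}{|O|}$ is a finite sum of binomials $\binom{N}{m}$ with $m \geq 1$, each a polynomial in $N$ vanishing at $N=0$; hence $a_\pi$ is a polynomial without constant term, and the sum is automatically supported on $\nc$ since $\pi(j)$ is always non-crossing.

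I expect the main obstacle to be the inductive evaluation in the second paragraph: one must verify carefully that the maximal-value runs are genuine interval blocks lying at the innermost level of $\pi(j)$, and that peeling them off via the monotone-independence relation reproduces the recursive definition of $\varphi_{\pi(j)}$ exactly, including the correct absorption of the resulting $\mathfrak{B}$-valued factors into the adjacent variables.
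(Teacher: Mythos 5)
Your proof is correct, and while it shares the paper's first key idea---reducing each mixed moment $\varphi(X_1^{(j(1))}\cdots X_n^{(j(n))})$ of independent copies to a partition functional $\varphi_\sigma$ with $\sigma$ non-crossing---it establishes the polynomial structure of the coefficients by a genuinely different mechanism. The paper's proof has two steps: Step 1 associates to each label sequence an ordered partition and then a non-crossing partition via a procedural map $Q$ (splitting blocks that cross), asserting the identity $\varphi(X_1^{(i_1)}\cdots X_n^{(i_n)})=\varphi_{Q(\pi)}(X_1,\ldots,X_n)$ with only the remark ``we can check''; Step 2 then obtains polynomiality \emph{indirectly}, by induction on $n$: it expands the telescoping difference $\varphi((M.X_1)\cdots(M.X_{n+1}))-\varphi(((M-1).X_1)\cdots((M-1).X_{n+1}))$ using monotone independence of $(X_i^{(1)})$ versus $(X_i^{(2)}+\cdots+X_i^{(M)})$, applies the induction hypothesis to the inner factors, and sums over $M$, using that $\sum_{M=1}^{N}M^{p}$ is a polynomial in $N$ of degree $p+1$ without constant term. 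You instead do a one-shot expansion over all label functions $j$ and then \emph{count}: since $\pi(j)$ depends only on the ordered partition $O(j)$, and $\#\{j : O(j)=O\}=\binom{N}{|O|}$, you get the explicit universal coefficients $a_\pi(N)=\sum_{O:\pi(O)=\pi}\binom{N}{|O|}$, each a polynomial vanishing at $N=0$. Your equivalence-relation description of $\pi(j)$ (same value and no smaller value in between) together with the peeling of maximal-value runs is a clean, correct replacement for the paper's map $Q$, and it comes with an actual proof where the paper only asserts the reduction; the absorption of $\mathfrak{B}$-valued factors that worries you is indeed the point requiring care, but it goes through because $X\mapsto X^{(j)}$ is a $\mathfrak{B}$-homomorphism (so a run collapses to a single element of one copy, and $b X^{(j)} = (bX)^{(j)}$ keeps the reduced word in standard form) and $\varphi$ is $\mathfrak{B}$-linear. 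What your route buys is explicitness: the coefficients are identified combinatorially, which strengthens the lemma and makes the universality remark following it automatic. What the paper's route buys is economy: it never needs to verify that the resulting partition depends only on the ordered partition, nor to perform any enumeration, at the price of a purely existential conclusion about the $a_\pi$. One cosmetic point: your induction is really on the number of distinct values of $j$ (equivalently it can be run on $n$, since peeling removes positions); either works, but state one and use it consistently.
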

\begin{proof}
(Step 1) To calculate $\varphi\Big((N.X_1)\cdots (N.X_n)\Big)$, we have to know $\varphi(X_1^{(i_1)}\cdots X_n^{(i_n)})$ for each sequence 
$(i_1,\cdots,i_n)$ of natural numbers. We first prove that $\varphi(X_1^{(i_1)}\cdots X_n^{(i_n)})$ can be written as $\varphi_\sigma(X_1,\cdots,X_n)$ with a $\sigma \in \nc$. 
 Let us associate an ordered partition $\pi =(V_1,\cdots, V_p)$ to a sequence $(i_1,\cdots,i_n)$ as follows. First we define $p_1:= \max \{i_k:1\leq k \leq n\}$ and then $V_1:= \{k: i_k=p_1\}$. Next we define $p_2:=\max \{i_k: k \notin V_1 \}$ and $V_2:= \{k:i_k=p_2\}$. Recursively we define $p_m:=\max \{i_k: k \notin V_1 \cup \cdots \cup V_{m-1} \}$ and $V_m:= \{k:i_k=p_m\}$. Then we obtain an ordered partition $\pi$. 

Next let us define a map $Q$ from $\lpn$ onto $\nc$.

(1) For $\pi=(V_1,\cdots,V_r) \in \lpn$, let us focus on $V_r$ at first. If there exists a block $V_i$ such that $V_r$ and $V_i$ are crossing\footnote{Blocks $V$ and $W$ are said to be crossing if there are $a,c \in V$ and $b,d \in W$ such that $a <b<c<d$ or $d<c<b<a$.}, then we can take the maximal partition $\sigma$ of $V_r$ such that no block of $\sigma$ crosses $V_i$ (`maximal' is for the partial order $\leq$). Iterating this procedure for every $V_i$ crossing $V_r$, we finally obtain the maximal partition $\sigma_r$ of $V_r$, no block of which crosses the other blocks $V_1,\cdots,V_{r-1}$. 

(2) We define ordered partitions $\pi_k:=(V_1,\cdots,V_k) \in \mathcal{LP}(\cup_{i=1}^k V_i)$. Then we carry out the procedure (1) for $\pi_k$ from $k=r-1$ to $k=1$, to obtain partitions $\sigma_k$. Thus we obtain a non-crossing partition $Q(\pi)$ of $\{1,\cdots,n\}$, gathering $\{\sigma_k\}_{k=1}^r$.  
Figs.\ \ref{dia5}--\ref{dia7} are examples of the map $Q$. 
We can check that $\varphi(X_1^{(i_1)}\cdots X_n^{(i_n)})$ is equal to $\varphi_{Q(\pi)}(X_1,\cdots,X_n)$ if $\pi$ denotes 
the ordered partition associated to $(i_1,\cdots,i_n)$. 

\begin{figure}[htbp]
\begin{center}
\includegraphics[width=8cm,clip]{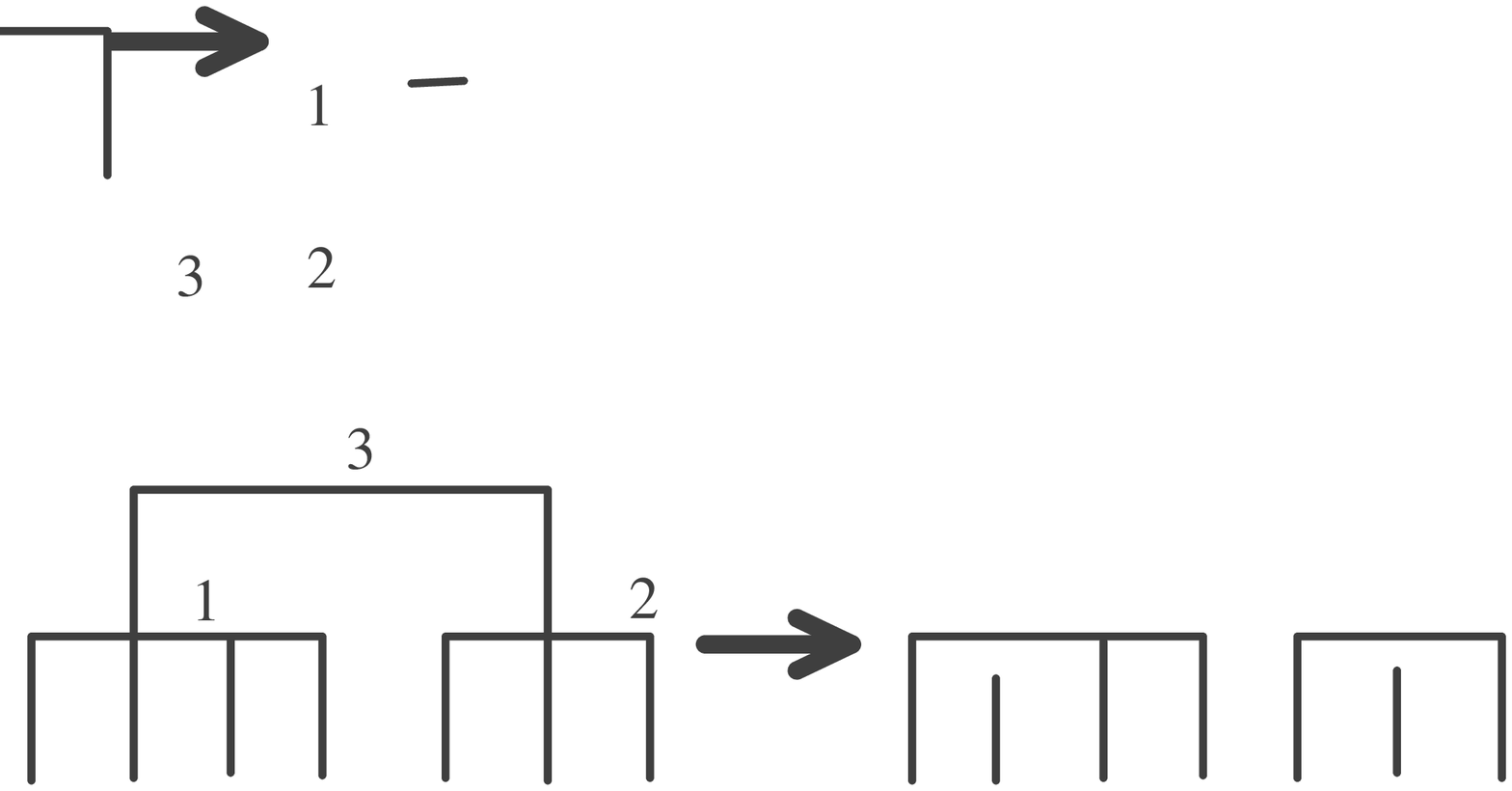}
\caption{The ordered  partition $(\{1,3,4\}, \{5,7 \}, \{2,6 \})$ is mapped to the non-crossing partition $\{ \{1\}, \{2,6\}, \{3,4\}, \{5\},\{7\} \}$.}
\label{dia5}
\includegraphics[width=8cm,clip]{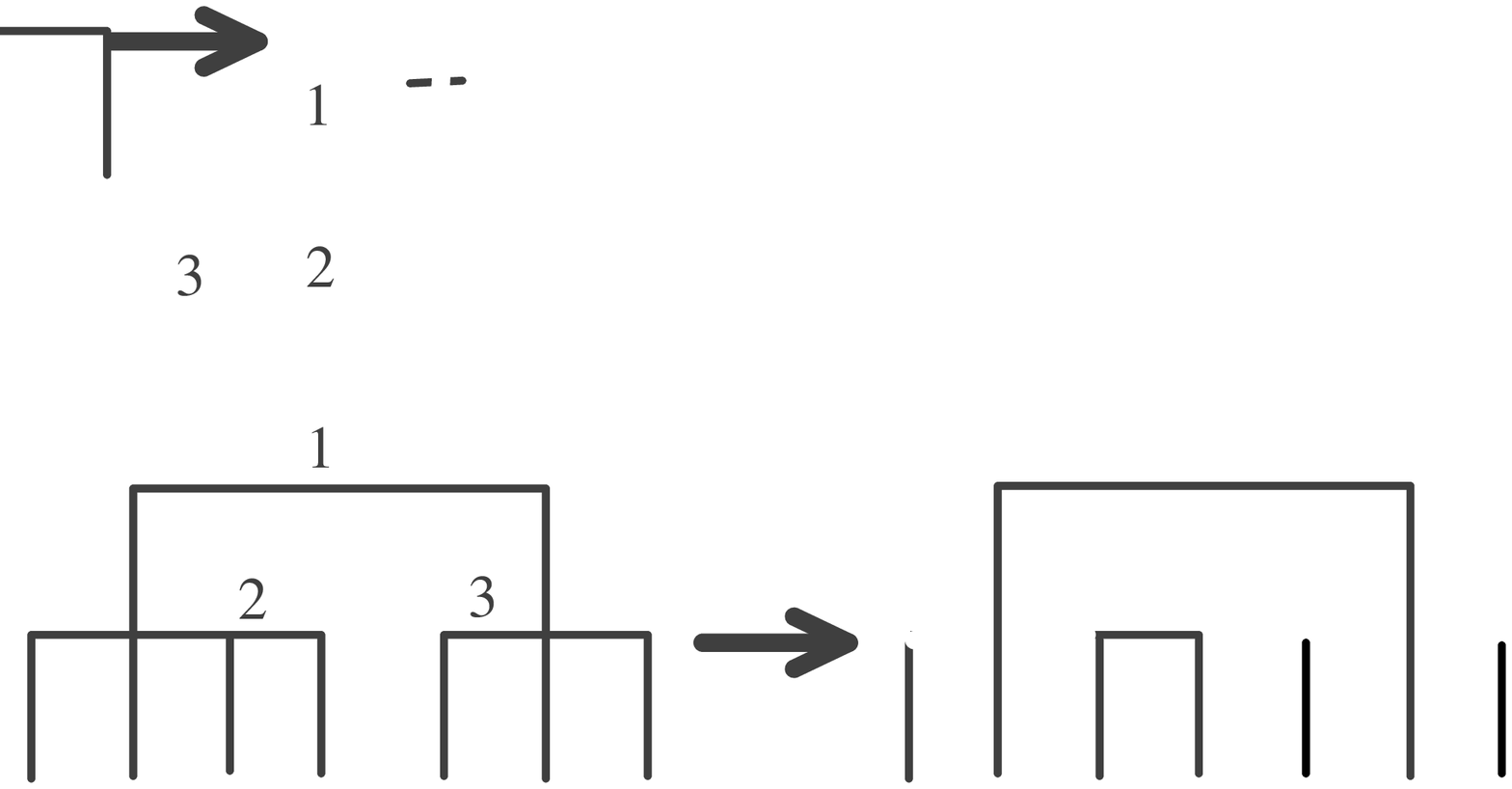}
\caption{The ordered  partition $(\{2,6 \}, \{1,3,4\}, \{5,7 \})$ is mapped to the non-crossing partition $\{ \{1\}, \{2,6\}, \{3,4\},\{5\},\{7\} \}$.}
\label{dia6}\includegraphics[width=8cm,clip]{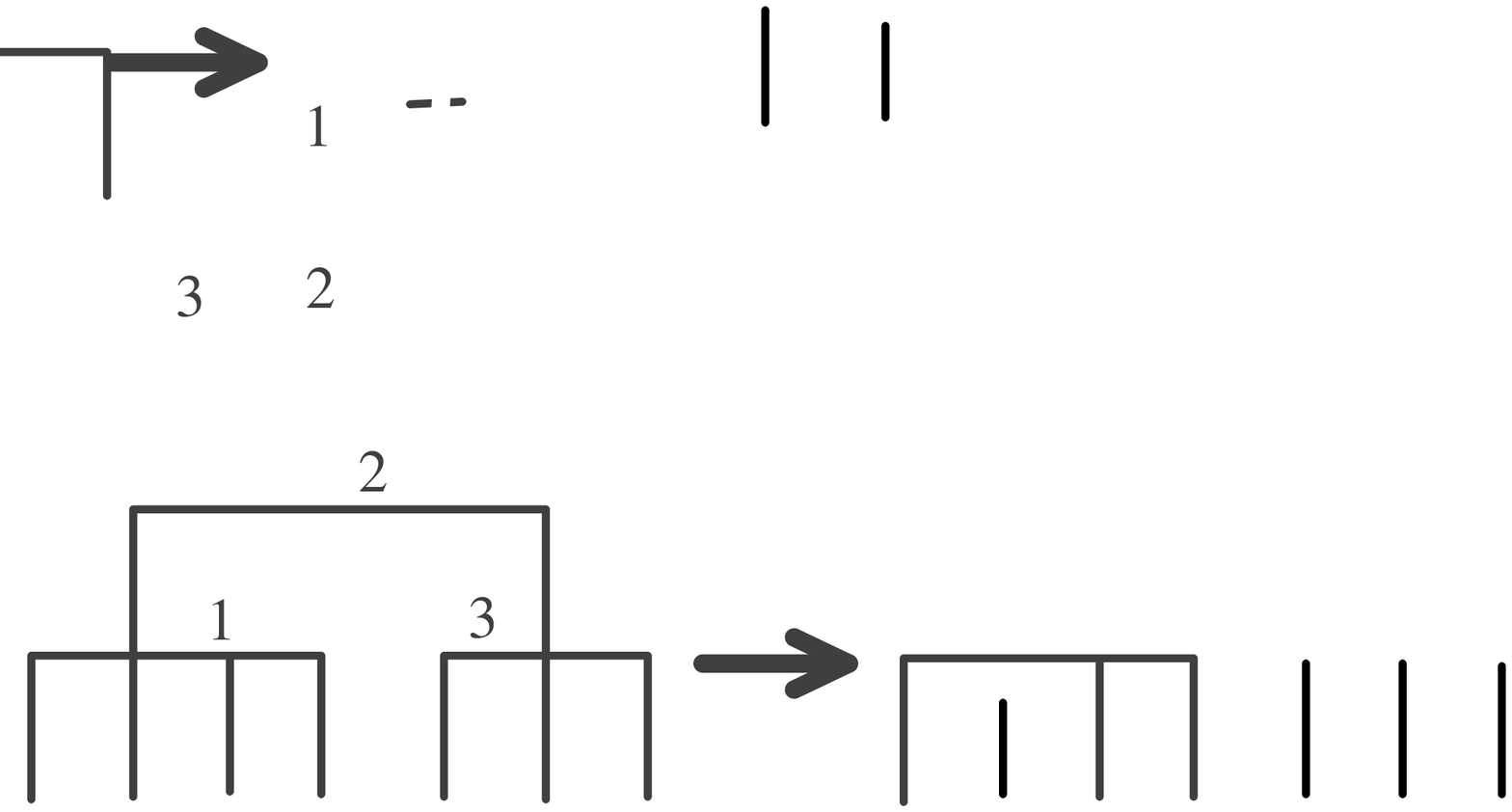}
\caption{The ordered  partition $(\{1,3,4\},  \{2,6 \}, \{5,7 \})$ is mapped to the non-crossing partition $\{ \{1,3,4\}, \{2\}, \{5\}, \{6\},\{7\} \}$.}
\label{dia7}
\end{center}
\end{figure}

(Step 2) The remaining proof is similar to the $\comp$-valued case, based on induction. For $n=1$,  $\varphi(N.X_1) = N\varphi(X_1)$, so that the assertion is true. We assume that the assertion is true for $n$. The identity 
\[
\begin{split}
&\varphi\left((X_1+Y_1)\cdots (X_{n+1} + Y_{n+1})\right) -\varphi(Y_1 \cdots Y_{n+1}) =\sum_{\substack{Z_i\in\{X_i,Y_i\},1 \leq i \leq n+1,\\\text{$Z_j=Y_j$ for some $j$.}}}\varphi(Z_1\cdots Z_{n+1}),
\end{split}
\]
holds for any random variables $X_i, Y_i$. We assume random vectors $(X_1,\cdots,X_{n+1})$ and $(Y_1,\cdots,Y_{n+1})$ are monotone independent over $\mathfrak{B}$. 
Then each term $\varphi(Z_1\cdots Z_{n+1})$ factorizes by using monotone independence and expectations $\varphi(Y_{k}Y_{k+1} \cdots Y_{k+m-1})$ appear satisfying $1 \leq m \leq n$. 
 Let us replace $X_i$ by $X_i^{(1)}$ and $Y_i$ by $X_i^{(2)} + \cdots + X_i^{(M)}$ in $\varphi((X_1+Y_1)\cdots (X_{n+1} + Y_{n+1}))$. 
Then the difference 
$\varphi(M.X_1\cdots M.X_{n+1}) - \varphi((M-1).X_1\cdots (M-1).X_{n+1})$ can be written as 
\begin{equation}\label{eq24}
\sum_{\pi \in \mathcal{NC}(n+1)} b_\pi(M)\varphi_\pi(X_1,\cdots, X_{n+1})  
\end{equation}
for some polynomials $b_\pi(x)$, by using the assumption of induction. 
The above sum is taken over only non-crossing partitions because of the assumption of induction and Step 1. By summing up (\ref{eq24}) over $M$ from $1$ to $N$, we conclude the assertion for $n+1$ since $\sum_{M=1}^N M^{p}$ is a polynomial on $N$ of degree $p+1$ without a constant term. 
\end{proof}

\begin{rem}
The coefficients $a_\pi(N)$ are universal in the sense that they do not depend on a choice of  noncommutative probability spaces $(\mathcal{A}, \mathfrak{B}, \varphi)$ and $(\widetilde{\mathcal{A}}, \mathfrak{B}, \widetilde{\varphi})$ of Definition \ref{dot}. 
\end{rem}
\begin{defi}
We define an $n$th joint cumulant $K_n(X_1,\cdots,X_n)$ as the coefficient of $N$ appearing in $\varphi \Big((N.X_1)(N.X_2)\cdots (N.X_n)\Big)$. $K_n(X_1,\cdots,X_n)$ is also written as $K_n^X$ with $X=(X_1,\cdots,X_n)$ for simplicity.  
\end{defi}

The arguments in \cite[Proposition 5.1, Corollary 5.2, Theorem 5.3]{H-S2} can be easily extended to the operator-valued setting, and one obtains the following.   
\begin{thm}
The following moment-cumulant formula holds: 
\[
\varphi(X_1\cdots X_n) = \sum_{\pi \in \mathcal{M}(n)}\frac{1}{|\pi|!}K_\pi(X_1,\cdots,X_n).  
\]
\end{thm}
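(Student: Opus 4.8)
The plan is to prove the following sharper ``one-parameter'' identity, valid for every $N \in \nat$:
\[
\varphi\big((N.X_1)\cdots (N.X_n)\big) = \sum_{\pi \in \mathcal{M}(n)} \frac{N^{|\pi|}}{|\pi|!}\, K_\pi(X_1,\cdots,X_n)
\]
(call it $(\star)$), and then to specialize to $N=1$. Indeed $1.X_i = X_i^{(1)}$, so property (2) of Definition~\ref{dot} gives $\varphi\big((1.X_1)\cdots(1.X_n)\big)=\varphi(X_1\cdots X_n)$, while the right-hand side of $(\star)$ at $N=1$ is exactly $\sum_{\pi\in\mathcal{M}(n)}\frac{1}{|\pi|!}K_\pi$. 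Thus the theorem is the case $N=1$ of $(\star)$, which is the operator-valued analogue of \cite[Proposition 5.1]{H-S2}. As a guiding consistency check I would first record the scaling relation $K_m(N.Y_1,\cdots,N.Y_m)=N\,K_m(Y_1,\cdots,Y_m)$: by Proposition~\ref{prop5} the cumulant $K_m(N.Y)$ is the linear coefficient in $N'$ of $\varphi\big((N'.(N.Y_1))\cdots\big)=\varphi\big(((N'N).Y_1)\cdots\big)$, namely $N\,K_m(Y)$. Because the functionals $K_\pi$ are assembled from the $K_{|V|}$ by nested $\mathfrak{B}$-multilinear substitution, this propagates through the blocks to $K_\pi(N.Y_\cdot)=N^{|\pi|}K_\pi(Y_\cdot)$, which shows that $(\star)$ is precisely the self-consistent statement ``the moment--cumulant formula applied to the dotted variables''.

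I would then prove $(\star)$ directly, by induction on $n$, comparing the two sides as polynomials in $N$. By Lemma~\ref{lem32} the left-hand side $P_n(N):=\varphi((N.X_1)\cdots(N.X_n))$ is a polynomial with zero constant term, and the right-hand side $R_n(N)$ is manifestly such (every $\pi$ has $|\pi|\ge 1$); so it suffices to show $P_n=R_n$. The base case $n=1$ reads $N\varphi(X_1)=NK_1$. For the inductive step I would compare the discrete increments $P_n(N)-P_n(N-1)$ and $R_n(N)-R_n(N-1)$, using that both polynomials vanish at $N=0$. Writing $N.X_i=(N-1).X_i+X_i^{(N)}$ with the fresh copies $X_i^{(N)}$ monotone independent and sitting above all lower levels, expansion of the product yields
\[
P_n(N)-P_n(N-1)=\sum_{\emptyset\neq S\subseteq \underline{n}} \varphi(Z_1\cdots Z_n),\qquad Z_i=\begin{cases}X_i^{(N)}, & i\in S,\\ (N-1).X_i, & i\notin S.\end{cases}
\]
Since the top-level variables are local maxima, monotone independence contracts each maximal run of consecutive $S$-positions $\{k,\cdots,l\}$ into the factor $\varphi(X_k\cdots X_l)\in\mathfrak{B}$, inserted among the remaining lower-level variables. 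This expresses the increment through $\varphi$ of strictly fewer $(N-1)$-dotted variables (the $S=\underline{n}$ term being the constant $\varphi(X_1\cdots X_n)$), to which the induction hypothesis applies; converting the contracted moment factors into cumulants by moment--cumulant for smaller $n$ and then summing the increments over $M$ from $1$ to $N$ should reconstruct $R_n(N)$.

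The hard part will be the combinatorial bookkeeping of that last step: one must verify that, after replacing every contracted run $\varphi(X_k\cdots X_l)$ by its cumulant expansion and every lower-level factor by its $R$-expansion, and after performing the iterated summations over levels, the contributions reorganize exactly into a sum over $\mathcal{M}(n)$ with weight $N^{|\pi|}/|\pi|!$. Two features carry the burden. The monotone ordering of blocks (condition (2) in the definition of $\mathcal{M}(n)$) records which run sits at which level, so that each non-crossing partition is counted with precisely its admissible orderings; and the weight $1/|\pi|!$ together with the pure power $N^{|\pi|}$ (rather than a falling factorial) emerges from the telescoping sums $\sum_{M=1}^N M^{p}$ once the distinct-level choices are re-expressed after the moment-to-cumulant conversion, as in Step 2 of Lemma~\ref{lem32}. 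Since every manipulation above is $\mathfrak{B}$-multilinear, the identity is formally identical to its scalar counterpart in \cite{H-S2}, which is why the extension is routine once the contraction-and-summation combinatorics is set up correctly.
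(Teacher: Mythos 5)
Your plan --- establish the polynomial identity $(\star)$,
\[
\varphi\big((N.X_1)\cdots(N.X_n)\big)=\sum_{\pi\in\mathcal{M}(n)}\frac{N^{|\pi|}}{|\pi|!}K_\pi(X_1,\cdots,X_n),
\]
and then set $N=1$ --- is the right one, and it is in fact the route the paper itself takes: the paper's ``proof'' is a citation to the scalar arguments of \cite{H-S2} (Proposition 5.1 through Theorem 5.3), which run through precisely this kind of identity for dotted variables. Your auxiliary observations are also correct: the evaluation at $N=1$ via Definition \ref{dot}(2), the scaling $K_\pi(N.Y_1,\cdots,N.Y_n)=N^{|\pi|}K_\pi(Y_1,\cdots,Y_n)$ via Proposition \ref{prop5} and additivity, and the increment expansion of $P_n(N)-P_n(N-1)$ with contraction of maximal runs of top-level copies (this matches Step 2 of Lemma \ref{lem32}). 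The genuine gap is that the proposal stops exactly where the proof has to begin: the verification that, after the contracted runs are expanded in cumulants and the sums over levels $M=1,\cdots,N$ are performed, the terms reassemble into a sum over $\mathcal{M}(n)$ with weight $N^{|\pi|}/|\pi|!$ --- i.e., that each monotone partition is produced with exactly its admissible block orderings and that the telescoping yields a pure power rather than, say, a falling factorial --- is announced as ``the hard part'' and then only described heuristically, never carried out. That reorganization is the entire mathematical content of the theorem; what you have is a correct outline with the central combinatorial lemma missing.

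There is also a concrete circularity in the induction as you set it up. In the increment, the term $S=\underline{n}$ contributes the full moment $\varphi(X_1\cdots X_n)$, which after summing over levels becomes $N\varphi(X_1\cdots X_n)$; to compare this against $R_n(N)$ you would have to rewrite $\varphi(X_1\cdots X_n)$ in cumulants, but that rewriting is the moment--cumulant formula for length $n$ --- the statement being proved --- and your phrase ``converting the contracted moment factors into cumulants by moment--cumulant for smaller $n$'' does not apply to this term. The way to close the loop is never to convert it: show by induction that the coefficients of $N^k$ for $k\geq 2$ in $P_n$ and $R_n$ agree (both involve only cumulants of orders $<n$, since a monotone partition with $\geq 2$ blocks has all blocks of size $<n$), note that both polynomials vanish at $N=0$, and then observe that the linear coefficients agree automatically: that of $P_n$ is $K_n$ by the very definition of the cumulant, and that of $R_n$ is $K_n$ because only the one-block partition contributes to $N^1$. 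This final appeal to the definition of $K_n$, which simultaneously pins down the unconverted constant and delivers the theorem at $N=1$, is absent from your sketch; as written, the inductive step either assumes the conclusion or does not close.
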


From Proposition \ref{prop5} and the proof of \cite[Proposition 3.4]{H-S2}, one can prove the following additivity property of cumulants for monotone i.i.d.\ random variables. 
\begin{prop} For $X=(X_1,\cdots, X_n) \in \mathcal{A}^n$, 
\[
K_{n}^{N.X} = N K_n^X. 
\]
\end{prop}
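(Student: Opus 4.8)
The plan is to reduce the statement to a one-variable polynomial identity by combining Lemma~\ref{lem32} with the associativity of Proposition~\ref{prop5}. Fix $X=(X_1,\cdots,X_n)$ and consider the $\mathfrak{B}$-valued function
\[
F(t):=\varphi\big((t.X_1)\cdots(t.X_n)\big)=\sum_{\pi\in\nc}a_\pi(t)\,\varphi_\pi(X_1,\cdots,X_n),
\]
which by Lemma~\ref{lem32} agrees, for every $t\in\nat$, with a polynomial having no constant term. Writing this polynomial as $F(t)=\sum_{p\geq1}c_p\,t^p$ with coefficients $c_p\in\mathfrak{B}$, the very definition of the cumulant gives $c_1=K_n^X$.

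Applying the same lemma to the random vector $N.X=(N.X_1,\cdots,N.X_n)$, realized in the enlarged space of Definition~\ref{dot}, I would record that
\[
H(M):=\varphi\big((M.(N.X_1))\cdots(M.(N.X_n))\big)
\]
is likewise a polynomial in $M$ without constant term, and that $K_n^{N.X}$ is by definition its coefficient of $M$.

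The key step is Proposition~\ref{prop5}, applied with the two natural-number parameters interchanged: for all $M\in\nat$ it yields $H(M)=\varphi\big(((MN).X_1)\cdots((MN).X_n)\big)=F(MN)$. Since $F(MN)=\sum_{p\geq1}c_p\,N^p\,M^p$ is a polynomial in $M$, and $H(M)$ is another polynomial in $M$ taking the same value at every natural number, the two coincide coefficientwise. Reading off the coefficient of $M$ gives $K_n^{N.X}=N\,c_1=N\,K_n^X$, as claimed.

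I expect the only delicate points to be bookkeeping ones. First, one must ensure that the iterated dot $M.(N.X)$ is formed in a space where Proposition~\ref{prop5} is valid; this is automatic if the two dot constructions are performed successively as in Definition~\ref{dot}. Second, passing from equality of the functions $H$ and $F(\,\cdot\,N)$ on $\nat$ to equality of their $\mathfrak{B}$-valued coefficients requires that a polynomial be determined by its values at infinitely many integers, which is a standard finite-difference argument and is consistent with the universality of the coefficients $a_\pi$ noted in the remark after Lemma~\ref{lem32}. Neither point is a genuine obstacle, so the argument amounts to the coefficient extraction above.
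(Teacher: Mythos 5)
Your argument is correct and is essentially the paper's own proof: the paper cites Proposition~\ref{prop5} together with the proof of Proposition~3.4 of \cite{H-S2}, which is exactly this combination of the associativity-up-to-state identity with the polynomiality from Lemma~\ref{lem32} and extraction of the coefficient of $M$. Your write-up just makes explicit the bookkeeping (iterated dot operation, uniqueness of $\mathfrak{B}$-valued polynomial coefficients) that the cited reference handles.
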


Central limit theorem was considered in \cite{Popa} and its limit distribution was further studied in \cite{BPV}. 
As an application of cumulants, we can obtain an easy proof and a different formula of the limit distribution using an argument quite similar to that in \cite{H-S}. 
\begin{thm}
Let $(X_i)_{i=1}^\infty$ be monotone i.i.d.\ random variables. We assume that $\varphi(X_i) = 0$ for any $i$ and define $X(N) := \frac{X_1 + \cdots + X_N}{\sqrt{N}}$.  
Then 
\[
\begin{split}
&\lim_{N \to \infty}\varphi (b_1X(N) b_2\cdots b_nX(N)) \\ &~~~~~~~~~~~= \begin{cases}\sum_{\pi \in \mathcal{M}_2(n)}\frac{1}{|\pi|!}K_\pi(b_1 X_1, \cdots, b_n X_1) & \text{if $n$ is even}, \\   
0 & \text{if $n$ is odd},    
\end{cases} 
\end{split}
\]
where 
$\mathcal{M}_2(2k)$ is the set of monotone pair partitions $\{\pi \in \mathcal{M}(2k): |V| = 2 \text{~for any block $V$ of $\pi$} \}$. 
\end{thm}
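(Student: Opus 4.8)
The plan is to reduce everything to the moment--cumulant formula together with the additivity of cumulants, and then merely to count powers of $N$. First I would record the elementary reduction. Writing $S_N = X_1 + \cdots + X_N$, we have $b_1 X(N) b_2 \cdots b_n X(N) = N^{-n/2}\, b_1 S_N b_2 \cdots b_n S_N$, so by linearity of $\varphi$,
\[
\varphi(b_1 X(N)b_2\cdots b_n X(N)) = N^{-n/2}\,\varphi(b_1 S_N b_2 \cdots b_n S_N).
\]
Since the $X_i$ are monotone i.i.d., the joint moment on the right coincides with the one computed for the dot operation, namely $\varphi(b_1 S_N b_2 \cdots b_n S_N) = \varphi(b_1 (N.X_1) b_2 \cdots b_n (N.X_1))$; this uses only the universality of the moments noted in the Remark following Lemma~\ref{lem32}.

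Next I would establish the key homogeneity of $K_\pi$. Because $X \mapsto X^{(j)}$ is a $\mathfrak{B}$-homomorphism, $(bX_1)^{(j)} = b X_1^{(j)}$, and hence $b(N.X_1) = N.(bX_1)$ for every $b \in \mathfrak{B}$. Applying the additivity $K_m^{N.Y} = N K_m^Y$ to the random vector $Y = (b_{j_1}X_1, \ldots, b_{j_m}X_1)$ gives
\[
K_m(b_{j_1}(N.X_1), \ldots, b_{j_m}(N.X_1)) = N\, K_m(b_{j_1}X_1, \ldots, b_{j_m}X_1).
\]
Feeding this into the recursive definition of $K_\pi$ and using multilinearity to pull out one factor of $N$ per block, an induction on $|\pi|$ yields
\[
K_\pi(b_1(N.X_1), \ldots, b_n(N.X_1)) = N^{|\pi|}\, K_\pi(b_1 X_1, \ldots, b_n X_1), \qquad \pi \in \mpn.
\]

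Now I would apply the moment--cumulant formula to the random variables $b_j (N.X_1)$ and combine it with the two displays above to obtain the exact finite-$N$ identity
\[
\varphi(b_1 X(N) b_2 \cdots b_n X(N)) = \sum_{\pi \in \mpn} \frac{N^{|\pi| - n/2}}{|\pi|!}\, K_\pi(b_1 X_1, \ldots, b_n X_1),
\]
and let $N \to \infty$ termwise. Since $\varphi(X_1)=0$ and $\varphi$ is $\mathfrak{B}$-linear, the first cumulant satisfies $K_1(b_j X_1) = \varphi(b_j X_1) = 0$; as this value is inserted multiplicatively by the recursion, $K_\pi = 0$ whenever $\pi$ has a singleton block. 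For the surviving $\pi$ (all blocks of size $\geq 2$) one has $|\pi| \leq n/2$, with equality exactly when every block has size $2$, so the exponent $|\pi| - n/2$ is negative unless $\pi \in \mathcal{M}_2(n)$, in which case it is $0$. Thus for odd $n$ there are no such pair partitions and the limit is $0$, while for even $n$ only the pair partitions persist, giving the stated sum.

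I expect the main obstacle to be the homogeneity identity for $K_\pi$: one must verify that the factor $N$ produced by additivity on each block propagates correctly through the nested, $\mathfrak{B}$-valued recursion, and that the reduction $b(N.X_1) = N.(bX_1)$ is compatible with the insertion of inner cumulant values as elements of $\mathfrak{B}$. Once the clean identity $K_\pi(b_1(N.X_1),\ldots,b_n(N.X_1)) = N^{|\pi|}K_\pi(b_1 X_1,\ldots,b_n X_1)$ is secured, the passage to the limit is a routine bookkeeping of powers of $N$.
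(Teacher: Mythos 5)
Your proposal is correct and follows essentially the same route as the paper, which gives no details of its own but defers to the argument of \cite{H-S}: apply the moment--cumulant formula to $b_j(N.X_1)$, use additivity/homogeneity of cumulants to extract $N^{|\pi|}$, kill partitions with singleton blocks via $K_1(bX_1)=b\varphi(X_1)=0$, and let the power counting $|\pi|-n/2\le 0$ (with equality exactly for pair partitions) do the rest. Your explicit verification of the block-by-block homogeneity $K_\pi(b_1(N.X_1),\ldots,b_n(N.X_1))=N^{|\pi|}K_\pi(b_1X_1,\ldots,b_nX_1)$, using that $X\mapsto X^{(j)}$ is a $\mathfrak{B}$-homomorphism so that $b(N.X_1)=N.(bX_1)$, is precisely the operator-valued adaptation the paper leaves implicit.
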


\section{Generating functions}
A purpose of this paper is to investigate generating functions of moments and monotone cumulants of multivariate random variables $X=(X_1,\cdots,X_r)$. For $\mathfrak{B}=\comp$ and $r=1$, let us consider $M^X(z) = 1+ \sum_{n=1}^\infty \varphi(X^n)z^n$ and $\mu^X(z):=zM^X(z)$. In \cite{Mur3} Muraki proved that $\mu^{X+Y}(z) =\mu^X(\mu^Y(z))$ for monotone independent random variables $X$ and $Y$. 
This relation can also be written as 
\begin{equation}\label{eq64}
M^{X+Y}(z) = M^X(zM^Y(z)) M^Y(z). 
\end{equation} 
In \cite{Popa}, Popa proved an analogue of this formula for general $\mathfrak{B}$ and $r=1$. Muraki's formula was also extended to general $r$ with $\mathfrak{B}=\comp$~\cite{H-S2}. In this paper, we are going to prove the most general version, i.e., for general $\mathfrak{B}$ and $r$. 

This section is also related to the work of Dykema \cite{Dyk1}, in which the ring $\comp \llbracket z \rrbracket$ of formal power series was extended to the operator-valued case. A key in this extension is to replace a formal power series by a sequence of multilinear functionals. Dykema introduced a composition operation and investigated an algebraic structure of such multilinear functionals. 

In this section, we generalize such an algebraic structure to the multivariate case. In other words, we generalize the ring $\comp \llbracket z_1,\cdots,z_r \rrbracket$ generated by free indeterminates to the operator-valued case. Let us consider how to extend the composition of two functions. We generalize not the usual composition operation $\circ$, but a modified associative operation 
\begin{equation}\label{eq8}
(F \bullet G)(z):= F(zG(z)) G(z). 
\end{equation}
This is natural for monotone independence as we can guess from (\ref{eq64}). 

First we extend the ring $\comp \llbracket z_1,\cdots,z_r \rrbracket$ to include the $\mathfrak{B}$-valued case. 
\begin{defi}\label{defi13}
We define the set $\mulb$ of all $F=(F_{i_1,\cdots,i_{n}})_{i_1,\cdots,i_{n} \in \underline{r}, n \geq 0}$, where $F_{i_1,\cdots,i_{n}}$ is a multilinear functional from $\mathfrak{B}^{n}$ to $\mathfrak{B}$. $n=0$ corresponds to a constant $F_{\emptyset} \in \mathfrak{B}$.   
\end{defi}
To define an analogue of the composition, we need some notations and concepts. 
\begin{defi}\label{defi14}
Let $V$ be a subset of $\underline{n}$, denoted as $\{v_{1}, v_2, \cdots, v_{p} \}$ where $1 \leq v_1 < \cdots < v_p \leq n$. We moreover add edges $v_0:= 0$ and $v_{p+1}:=n+1$ for convenience. 
\begin{enumerate}[\rm(1)]
\item The interval blocks $V_i=\{v_{i-1}+1,\cdots, v_{i}-1\}$ for $1 \leq i \leq p+1$ are called the \textit{interpolation blocks of $V$ for $\underline{n}$}.  If $v_{i-1} + 1 = v_i$, we set $V_i: = \emptyset$. If $V=\emptyset$, we consider only one interpolation block $V_1=\underline{n}$.  
Clearly this notion can be extended for the case of any linearly ordered set.
\item Let $V=\{v_1,\cdots,v_p\}$ be a subset of $\underline{n}$ with $v_1 < \cdots < v_p$. For a tuple $(i_1,\cdots, i_{n}) \in \nat^n$, we define $i_V = i(V) := (i_{v_1},\cdots, i_{v_p})$. If $V=\emptyset$, then $i(V):=\emptyset$. For a multilinear functional $F_p:\mathfrak{B}^p \to \mathfrak{B}$ , we set $F_p(b_V):=F_p(b_{v_1},\cdots,b_{v_p})$. 
   \end{enumerate} \end{defi}

\begin{exa}
\begin{enumerate}[\rm(1)]
\item If $V = \{2,3,4,6\} \subset \underline{6}$, the interpolation blocks of $V$ for $\underline{6}$ are given by 
$V_1=\{1\}, V_2=V_3=V_5=\emptyset, V_4=\{5\}$. 
\item Let $\{1,2,3,4,6,7,8\}$ be endowed with the natural order structure.  If $V=\{3,4,7\}$, then the interpolation blocks of $V$ for  $\{1,2,3,4,6,7,8\}$ is given by $V_1=\{1,2\}, V_2=\emptyset, V_3=\{6\}, V_4=\{8\}$. 
\end{enumerate}
\end{exa}   
Now we introduce algebraic structure on $\mulb$ and some operations.    
\begin{defi}\label{defi11}
\begin{enumerate}[\rm(1)]
\item For $F,G \in \mulb$, $F \smiley G \in \mulb$ is defined by 
\[
\begin{cases}
\begin{split}
&(F\smiley G)_{i_1,\cdots,i_{n}}(b_1,\cdots, b_{n})\\&~~~~~~~~~~ :=
\sum_{V=\{v_1,v_2,\dots,v_p\} \subset \underline{n}} F_{i(V)}\big(G_{i(V_1)}(b_{V_1})b_{v_1},\cdots, G_{i(V_{p})}(b_{V_p})b_{v_p}\big)G_{i(V_{p+1})}(b_{V_{p+1}}), 
\end{split}& n \geq 1,\\
(F\smiley G)_{\emptyset}:=F_\emptyset G_\emptyset, 
\end{cases}
\]
where $V_i$ are the interpolation blocks of $V$ for $\underline{n}$. If an interpolation block $V_j$ is empty, then $G_{i(V_j)}(b_{V_j})$ is understood to be $G_\emptyset$. 
The sum over $V \subset \underline{n}$ includes the case $V = \emptyset$; in this case $F_{i(V)}\big(G_{i(V_1)}(b_{V_1})b_{v_1},\cdots, G_{i(V_{p})}(b_{V_p})b_{v_p}\big)G_{i(V_{p+1})}(b_{V_{p+1}})$ is understood to be $F_\emptyset G_{i_1,\cdots,i_n}(b_1,\cdots,b_n)$.  

\item Let $F(t) \in \mulb$ for $t \in \real$ such that each $F(t)_{i_1,\cdots,i_n}$ is differentiable with respect to $t$. Then we define 
$\frac{dF}{dt}(t) \in \mulb$ to be $(\frac{d}{dt}F(t)_{i_1,\cdots,i_n})_{i_1,\cdots,i_n \in \underline{r}, n \geq 0}$. 
\item We define a binary operation $\star$ on $\mulb$ by  
\[ 
\begin{split}
&(F\star G)_{i_1,\cdots,i_n}(b_1,\cdots,b_{n}):= \\&~~~~~~~~~\sum_{V =\{k,\cdots,k+l\} \in IB(n)} F_{i(V^c)}(b_1,\cdots, b_{k-1}, G_{i(V)}(b_V)b_{k+l +1},\cdots,b_{n}). 
\end{split}
\] 
If $k+l = n$, the summand is understood to be $F_{i(V^c)}(b_{V^c})G_{i(V)}(b_V)$. For $n=0$, we define $(F\star G)_\emptyset:=F_\emptyset G_\emptyset$. 
\end{enumerate}
\end{defi}
\begin{rem}
\begin{enumerate}[\rm(1)]
\item The operation $\smiley$ corresponds to the modified composition (\ref{eq8}). Moreover, a relation to the paper \cite{H-S2} is as follows. To treat generating functions related to random vectors with $\mathfrak{B}=\comp$, we use the formal power series
\begin{equation}\label{eq10}
A(z_1,\cdots, z_r) = a_\emptyset + \sum_{n=1}^\infty \sum_{i_1,\cdots,i_n =1}^r a_{i_1,\cdots,i_n}z_{i_1}\cdots z_{i_n},  
\end{equation}
where $a_\emptyset, a_{i_1,\cdots,i_n} \in \comp$ and $z_1,\cdots,z_r$ are free generators. Let us denote by $\cseries$ the set of such formal power series. Then we can define an associative product $\bullet$ as $F \bullet G:=S^{-1}((SF)\circ (SG))$, where 
$SF(z_1,\cdots,z_r):=(z_1F(z_1,\cdots,z_r), \cdots, z_rF(z_1,\cdots, z_r))$. 
This operation was essentially defined in \cite{H-S2}. For a given $A \in \cseries$ of the form (\ref{eq10}), we can 
associate $\widetilde{A} \in \text{\normalfont Mul}^r \llbracket \mathbb{C} \rrbracket$ by defining $\widetilde{A}_{i_1,\cdots,i_n}(b_1,\cdots,b_n):=a_{i_1,\cdots,i_n} b_1\cdots b_n$ ($b_i \in \comp$). Then the operation $\smiley$ coincides with $\bullet$. 
\item The operation $\star$ appears when we take a derivative of $F \circ H(t)$ regarding $\frac{d}{dt}|_0$ under the condition $H(0)=0$. 
\end{enumerate}
\begin{exa}
\begin{enumerate}[\rm(1)]
\item $(F \smiley G)_{i_1}(b_1) = F_{i_1}(G_\emptyset b_1)G_\emptyset + F_\emptyset G_{i_1}(b_1)$. 
\item $(F \smiley G)_{i_1,i_2}(b_1,b_2) = F_{i_1,i_2}(G_\emptyset b_1, G_\emptyset b_2)G_\emptyset + F_{i_1}(G_\emptyset b_1) G_{i_2}(b_2) +F_{i_2}(G_{i_1}(b_1)b_2)G_\emptyset + F_\emptyset G_{i_1,i_2} (b_1,b_2)$. 
\item $(F \smiley G)_{i_1,i_2,i_3}(b_1,b_2,b_3) = F_{i_1,i_2,i_3}(G_\emptyset b_1, G_\emptyset b_2, G_\emptyset b_3)G_\emptyset + F_{i_1,i_2}(G_\emptyset b_1, G_\emptyset b_2) G_{i_3}(b_3)$\\ $\,\,\,\,\,\,\,\,\,\,\,\,\,\,\,\,\,\,\,\,\,\,\,\,\,\,\,\,\,\,\,\,\,\,\,\,\,\,\,\,\,\,\,\,\,\,\,\,\,\,\,\,\,\,\,\,\,\,\,\,\,\,\,\,\,\,\,\,\,+F_{i_1, i_3}( G_\emptyset b_1, G_{i_2}(b_2)b_3)G_\emptyset +F_{i_2, i_3}(G_{i_1}(b_1)b_2, G_\emptyset b_3)G_\emptyset$ \\$\,\,\,\,\,\,\,\,\,\,\,\,\,\,\,\,\,\,\,\,\,\,\,\,\,\,\,\,\,\,\,\,\,\,\,\,\,\,\,\,\,\,\,\,\,\,\,\,\,\,\,\,\,\,\,\,\,\,\,\,\,\,\,\,\,\,\,\,\, +F_{i_1}(G_\emptyset b_1)G_{i_2,i_3}(b_2,b_3)  + F_{i_2}(G_{i_1}(b_1)b_2)G_{i_3}(b_3)$\\ $\,\,\,\,\,\,\,\,\,\,\,\,\,\,\,\,\,\,\,\,\,\,\,\,\,\,\,\,\,\,\,\,\,\,\,\,\,\,\,\,\,\,\,\,\,\,\,\,\,\,\,\,\,\,\,\,\,\,\,\,\,\,\,\,\,\,\,\,\,+ F_{i_3}(G_{i_1,i_2}(b_1,b_2)b_3)G_\emptyset + F_\emptyset G_{i_1,i_2, i_3} (b_1,b_2, b_3)$. 
\end{enumerate}

In general, $(F \smiley G)_{i_1,\cdots,i_n}(b_1,\cdots, b_n)$ can be written as the sum of $2^n$ terms. 
\end{exa}

\end{rem}
\begin{prop}
\begin{enumerate}[\rm(1)]
\item The composition $\smiley$ is associative: $(F\smiley G) \smiley H = F\smiley (G\smiley H)$ for any $F,G,H \in \mulb$. 
\item $\I:= (\I_{i_1,\cdots,i_n})$, defined by $\I_\emptyset = 1$ and $\I_{i_1,\cdots,i_n} =0$ for $n \geq 1$, is the identity for the operation $\smiley$: 
$\I \smiley F = F \smiley \I = F$ for any $F \in \mulb$.  
\item $(F+G) \smiley H = F\smiley H + G \smiley H$  for any $F,G,H \in \mulb$. 
\end{enumerate}
\end{prop}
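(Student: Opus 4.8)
The plan is to dispose of parts (2) and (3) directly from the definition of $\smiley$, and then to attack the associativity (1) by expanding both iterated compositions into a single sum indexed by a decomposition of $\underline{n}$ into three parts.

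For (3), I would use that addition in $\mulb$ is componentwise and that multilinear functionals add pointwise. In the defining sum for $(F+G)\smiley H$ the only place the left-hand argument enters is through the outer functional $(F+G)_{i(V)}=F_{i(V)}+G_{i(V)}$, every remaining factor being built from $H$ alone; splitting the sum term by term gives $F\smiley H + G\smiley H$, and the $n=0$ case is immediate. Thus (3) is essentially formal. For (2), I would exploit the vanishing pattern of $\I$. In $\I\smiley F$ the outer factor is $\I_{i(V)}$, which is zero unless $V=\emptyset$; the single surviving term $V=\emptyset$ is, by the stated convention, $\I_\emptyset F_{i_1,\cdots,i_n}(b_1,\cdots,b_n)=F_{i_1,\cdots,i_n}(b_1,\cdots,b_n)$. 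In $F\smiley\I$ the inner factors $\I_{i(V_j)}(b_{V_j})$ vanish unless the interpolation block $V_j$ is empty, which forces every interpolation block to be empty, i.e.\ $V=\underline{n}$; that single term equals $F_{i(\underline{n})}(b_1,\cdots,b_n)$ because each $\I_\emptyset=1$. Together with $F_\emptyset\I_\emptyset=\I_\emptyset F_\emptyset=F_\emptyset$ this yields (2).

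The substance is (1). I would expand each side into a double sum and reorganize it as a sum over ordered triples $A\sqcup B\sqcup C=\underline{n}$, where $A$ collects the positions fed to $F$, $B$ those fed to $G$, and $C$ those fed to $H$. On the left, $(F\smiley G)\smiley H$, the outer $\smiley H$ first selects $W=A\cup B$ (placing $H$ on the interpolation blocks of $W$, so that $C=\underline{n}\setminus W$), and then the inner expansion of $(F\smiley G)_{i(W)}$ selects $A\subseteq W$ (placing $G$ on the interpolation blocks of $A$ inside $W$). On the right, $F\smiley(G\smiley H)$ first selects $A$ for $F$, and then within each interpolation block of $A$ the factor $(G\smiley H)$ selects the $G$-positions $B$ and leaves the rest to $H$. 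Both recipes range over exactly the set of decompositions $(A,B,C)$, so the index sets match; this bijection is the easy half.

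The hard part will be checking that the two procedures produce the same summand for a fixed $(A,B,C)$. Here one must track, position by position, where each $b_k$ is placed, how the nested factors are grouped, and in particular that the interpolation blocks \emph{within} interpolation blocks produced on the right agree with the single-level interpolation blocks cut out on the left, including the correct handling of empty blocks and of the trailing (rightmost) factors at each level. Once the grouping of the arguments is matched, the two expressions differ only by the order in which the associative multiplication of $\mathfrak{B}$ is performed, so they coincide. I expect this argument-bookkeeping to be the only real obstacle; note that the reduction to $\mathfrak{B}=\comp$ recorded in the Remark does not suffice, since there the matching is achieved by commuting scalar factors, whereas for noncommutative $\mathfrak{B}$ the placement of the $b_k$ must be verified to agree exactly.
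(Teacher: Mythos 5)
Your handling of (2) and (3) is correct and is at the level of detail the paper intends (it dismisses both as ``not difficult''); in particular, the observation that in $F\smiley\I$ every interpolation block must be empty, forcing $V=\underline{n}$, is exactly the right argument.

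For (1) you use the same combinatorial decomposition as the paper --- every term of either iterated composition is labelled by an assignment of each position of $\underline{n}$ to one of $F$, $G$, $H$, i.e.\ by your triples $(A,B,C)$, equivalently by words in $\{F,G,H\}^n$ --- but the mechanism for concluding equality differs. You propose to match summands across the two expansions directly, for each fixed $(A,B,C)$, and you correctly isolate the crux: the nested interpolation-block bookkeeping, after which only associativity of multiplication in $\mathfrak{B}$ remains (the left side groups an argument of $F$ as $G(\cdots)\bigl(H(\cdots)b\bigr)$, the right side as $\bigl(G(\cdots)H(\cdots)\bigr)b$). The paper avoids this cross-matching altogether: it defines, for each word $(A_1,\cdots,A_n)$, a canonical functional $L_{(A_1,\cdots,A_n)}$, treats the components of $F,G,H$ as indeterminates so that the $3^n$ functionals in $\mathcal{L}_n$ are pairwise distinct, observes that each side expands by definition into a sum of \emph{distinct} elements of $\mathcal{L}_n$, and then simply counts ($\sum_p\binom{n}{p}2^p=3^n$ terms on each side) to conclude that both sides equal $\sum_{L\in\mathcal{L}_n}L$. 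The counting device buys precisely what you flag as the hard part: one never has to decide which left-hand term corresponds to which right-hand term, only to recognize each term as canonical and pairwise distinct. Conversely, your direct matching dispenses with the distinctness-as-indeterminates argument and the cardinality count, at the price of doing the position-by-position verification explicitly. Both routes are sound, but note that in your plan that verification is the entire content of the proof and is left as a promissory note; if you carry it out, you will in effect reconstruct the paper's canonical forms $L_{(A_1,\cdots,A_n)}$, since the terms of the expansion of $F\smiley(G\smiley H)$ are literally of that shape, and the nontrivial half of your bookkeeping is to show the terms of $(F\smiley G)\smiley H$ are too. Your closing remark is also apt: the paper's Remark reducing $\smiley$ to the scalar operation $\bullet$ for $\mathfrak{B}=\comp$ cannot substitute for this argument, which is why the paper proves associativity directly.
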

\begin{proof}
(1) We fix a tuple $(i_1, \cdots, i_n)$ and we are going to prove the following:
\[
((F\smiley G) \smiley H)_{i_1,\cdots,i_n}(b_1,\cdots,b_{n})=(F\smiley (G\smiley H))_{i_1,\cdots,i_n}(b_1,\cdots,b_{n}). 
\]
For each sequence of words $\{F,G,H\}$ of length $n$, one can associate an $n$-linear functional as follows. Given a sequence $(A_1, \cdots, A_n)$, where each $A_j$ is equal to $F, G$ or $H$,  we gather the indices $j$ such that $A_j=F$ and denote them by $j_1 < j_2 < \cdots <j_p$.  
Let $J_s$ be the interpolation blocks ($s=1,\cdots, p+1$) of $J:=\{j_1, \cdots,j_p\}$ for $\underline{n}$. If $J$ is the empty set, then we understand that $p=0$ and $J_1=\underline{n}$. 
For each $s$, let us take all the indices $k \in J_s$ such that $A_k=G$ and denote them by $k_1^{(s)}< \cdots< k_{q(s)}^{(s)}$.  Let $K_m^{(s)}$ ($m=1,\cdots,q(s)+1$) be the interpolation blocks of $K^{(s)}:=\{k_1^{(s)}, \cdots, k_{q(s)}^{(s)}\}$ for the linearly ordered set $J_s$. 
Now we define 
\[
I_s(b_{J_s}) := 
\begin{cases}
\begin{split}
&G_{i(K^{(s)})}\left(H_{i(K_1^{(s)})}(b_{K_1^{(s)}})b_{k_1^{(s)}},  H_{i(K_2^{(s)})}(b_{K_2^{(s)}})b_{k_2^{(s)}}, \cdots,\right. \\
&~~~~~~~~~~~\left.H_{i(K_{q(s)}^{(s)})}(b_{K_{q(s)}^{(s)}})b_{k_{q(s)}^{(s)}}\right)H_{i(K_{q(s)+1}^{(s)})}(b_{K_{q(s)+1}^{(s)}}),  
\end{split}& \text{~if~}K^{(s)} \neq \emptyset, \\
G_\emptyset H_{i(K_{q(s)+1}^{(s)})}(b_{K_{q(s)+1}^{(s)}}),  & \text{~if~}K^{(s)} = \emptyset. 
\end{cases}
\]
We understand $H_{i(K_m^{(s)})}(b_{K_m^{(s)}}) = H_\emptyset \in \mathfrak{B}$ if $K_m^{(s)} = \emptyset$. 
Using $I_s$, we define an $n$-linear functional
$$
L_{(A_1,\cdots,A_n)}(b_1,\cdots,b_n):= 
\begin{cases}
F_{i_1,\cdots,i_n}(I_1(b_{J_1})b_{j_1}, \cdots, I_{p}(b_{J_{p}})b_{j_p}) I_{p+1}(b_{J_{p+1}}), &\text{~if~} J \neq \emptyset, \\
 F_\emptyset I_{p+1}(b_{J_{p+1}}),  &\text{~if~} J = \emptyset. 
 \end{cases}
$$
Examples of $L_{(A_1,\cdots,A_n)}$ can be found in the tables below.

\begin{table}[h]
\begin{minipage}{0.5\hsize}
\begin{center}
\begin{tabular}{|ll|} \hline
$(A_1)$ &  $L_{(A_1)}(b_1)$ \\ \hline
$(F)$ & $F_{i_1}(G_\emptyset H_\emptyset b_1)G_\emptyset H_\emptyset$  \\
$(G)$ & $F_{\emptyset}G_{i_1}(H_\emptyset b_1) H_\emptyset$ \\ 
$(H)$ &$F_\emptyset G_\emptyset H_{i_1}(b_1)$ \\ \hline
\end{tabular}
\caption{The elements of $\mathcal{L}_1$.}

\end{center}
  \end{minipage}
\begin{minipage}{0.5\hsize}
\begin{center}
\begin{tabular}{|ll|} \hline
$(A_1,A_2)$ &  $L_{(A_1,A_2)}(b_1,b_2)$ \\ \hline
$(F, F)$ & $F_{i_1,i_2}(G_\emptyset H_\emptyset b_1,G_\emptyset H_\emptyset b_2)G_\emptyset H_\emptyset$  \\
$(F, G)$ & $F_{i_1}(G_\emptyset H_\emptyset b_1)G_{i_2}( H_\emptyset b_2) H_\emptyset$ \\ 
$(F,H)$ &$F_{i_1}(G_\emptyset H_\emptyset b_1)G_\emptyset H_{i_2}(b_2)$ \\
$(G,F)$ & $F_{i_2}(G_{i_1}(H_\emptyset b_1)  H_\emptyset b_2) G_\emptyset  H_\emptyset$   \\
$(G,G)$ & $F_\emptyset G_{i_1,i_2}(H_\emptyset b_1, H_\emptyset b_2)H_\emptyset$   \\ 
$(G,H)$ &$F_\emptyset G_{i_1}(H_\emptyset b_1) H_{i_2}(b_2)$ \\
$(H,F)$ & $F_{i_2}(G_\emptyset H_{i_1}(b_1)b_2)G_\emptyset H_\emptyset$   \\
$(H,G)$ & $F_\emptyset G_{i_2}(H_{i_1}(b_1) b_2)H_\emptyset$ \\ 
$(H, H)$ & $F_\emptyset G_\emptyset H_{i_1,i_2}(b_1,b_2)$  \\ \hline
\end{tabular}
\caption{The elements of $\mathcal{L}_2$.}
\end{center}
\end{minipage}

\begin{center}
\begin{tabular}{|ll|} \hline
Word sequence & Multilinear functional \\ \hline
$(F, F, G)$ & $F_{i_1,i_2}(G_\emptyset H_\emptyset b_1,G_\emptyset H_\emptyset b_2)G_{i_3}(H_\emptyset b_3)H_\emptyset $  \\
$(F, G, F)$ & $F_{i_1,i_3}(G_\emptyset H_\emptyset b_1, G_{i_2}(H_\emptyset b_2)H_\emptyset b_3)G_\emptyset H_\emptyset $ \\ 
$(F,H,G, F)$ & $F_{i_1, i_4}(G_\emptyset H_\emptyset b_1, G_{i_3}(H_{i_2}(b_2)b_3)H_\emptyset b_4)G_\emptyset H_\emptyset $   \\
$(F,G, F, H)$ & $F_{i_1,i_3}(G_\emptyset H_\emptyset b_1, G_{i_2}(H_\emptyset b_2)H_\emptyset b_3)G_\emptyset H_{i_4}(b_4)$ \\
$(H,G, H,G)$ & $F_\emptyset G_{i_2, i_4}(H_{i_1}(b_1)b_2, H_{i_3}(b_3)b_4)H_\emptyset $   \\
$(H,G, F, G)$ & $F_{i_3}(G_{i_2}(H_{i_1}(b_1)b_2)H_\emptyset b_3)G_{i_4}(H_\emptyset b_4)H_\emptyset $ \\ \hline
\end{tabular}
\caption{Selected elements of $\mathcal{L}_n$ for $n =3,4$.}
\end{center}
\begin{center}
\begin{tabular}{|ll|}\hline
Word sequence & Multilinear functional \\ \hline
$(F,H,G, F,F)$ & $F_{i_1, i_4, i_5}(G_\emptyset H_\emptyset b_1, G_{i_3}(H_{i_2}(b_2)b_3)H_\emptyset b_4, G_\emptyset H_\emptyset b_5)G_\emptyset H_\emptyset $   \\
$(G,G, F, H,F,F)$ & $F_{i_3,i_5,i_6}(G_{i_1,i_2}(H_\emptyset b_1, H_\emptyset b_2)H_\emptyset b_3, G_\emptyset H_{i_4}(b_4)b_5, G_\emptyset H_\emptyset b_6)G_\emptyset H_\emptyset $ \\
$(G,H,H,F,G,F,H,H)$ & $F_{i_4,i_6}\left(G_{i_1}(H_\emptyset b_1)H_{i_2,i_3}(b_2,b_3)b_4,G_{i_5}(H_\emptyset b_5)H_\emptyset b_6\right)G_\emptyset H_{i_7,i_8}(b_7,b_8)$ \\ \hline
\end{tabular}
\caption{Selected elements of $\mathcal{L}_n$ for $n \geq 5$.}
\end{center}
\end{table}
Let us denote by $\mathcal{L}_n$ the set $\{L_{(A_1,\cdots,A_n)} \mid A_i \in \{F,G,H\}, 1\leq  i \leq n\}$, regarding the multilinear 
functionals $F_{i_1, \cdots,i_k}, G_{i_1,\cdots,i_k}, H_{i_1,\cdots,i_k}$ as indeterminates. Then the map $(A_1, \cdots, A_n) \mapsto L_{(A_1,\cdots,A_n)}$ is one-to-one to a set of indeterminates, and hence the cardinality of $\mathcal{L}_n$ is $3^n$. 
The multilinear functional  $((F\smiley G) \smiley H)_{i_1,\cdots,i_n}(b_1,\cdots,b_{n})$  can be expanded as the sum of distinct elements of $\mathcal{L}_n$ by definition, and moreover, the number of those elements is $\sum _i \binom{n-i}{i} 2^i=3^n$. 
So every element of $\mathcal{L}_n$ appears just once in $((F\smiley G) \smiley H)_{i_1,\cdots,i_n}(b_1,\cdots,b_{n})$, which means 
$((F\smiley G) \smiley H)_{i_1,\cdots,i_n}= \sum_{L \in \mathcal{L}_n}L$. 
A similar reasoning implies that $(F\smiley (G\smiley H))_{i_1,\cdots,i_n}= \sum_{L \in \mathcal{L}_n}L$. 
So we can conclude that $((F\smiley G) \smiley H)_{i_1,\cdots,i_n}=(F\smiley (G\smiley H))_{i_1,\cdots,i_n}$. 

Assertions (2) and (3) are not difficult. 
\end{proof}

\begin{defi}
\begin{enumerate}[\rm(1)]
\item We define a generating function of joint moments by 
\[
\mu^X:=(\mu^X _{i_1,\cdots,i_{n}})_{i_1,\cdots,i_{n} \in \underline{r}, n \geq 0} \in \mulb. 
\]
For $n =0$, $\mu^X_\emptyset$ is defined to be the unit  $1_\mathfrak{B}$. 
\item A generating function of cumulants $\kappa^X \in \mulb$ for $X=(X_1,\cdots,X_r)$ is defined by 
\[
\begin{cases}
\kappa^X_{i_1,\cdots,i_n} (b_1,\cdots,b_{n}) := K_n(b_1X_{i_1},\cdots,b_nX_{i_n}),& n \geq 1,\\ 
\kappa^X_\emptyset := 0. &
\end{cases}
\]
\end{enumerate}
\end{defi}

The following theorem extends formulae of \cite{Popa} and \cite{H-S2} to the sum of independent random vectors. 
\begin{thm}\label{thm11} (Extended Muraki's formula) 
Let $X=(X_1,\cdots, X_{r})$ and $Y=(Y_1,\cdots,Y_{r})$ be random vectors which are monotone independent over $\mathfrak{B}$. Then 
\[
\mu^{X+Y} = \mu^X \smiley \mu^Y. 
\]
\end{thm}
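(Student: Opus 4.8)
The plan is to expand $\mu^{X+Y}_{i_1,\cdots,i_n}(b_1,\cdots,b_n)=\varphi\big(b_1(X_{i_1}+Y_{i_1})\cdots b_n(X_{i_n}+Y_{i_n})\big)$ into its $2^n$ terms and to organize them by the set of positions carrying an $X$-variable, collapsing the intervening runs of $Y$-variables by monotone independence. I take the orientation forced by the shape of $\smiley$: the subalgebra $\mathcal{A}_Y:=\mathfrak{B}\langle Y_1,\cdots,Y_r\rangle_0$ is higher in the monotone order than $\mathcal{A}_X:=\mathfrak{B}\langle X_1,\cdots,X_r\rangle_0$, so that $Y$-factors are the ones collapsed by $\varphi$.

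First I would fix $i_1,\cdots,i_n$ and $b_1,\cdots,b_n\in\mathfrak{B}$ and expand the product multilinearly, indexing the terms by subsets $V=\{v_1<\cdots<v_p\}\subset\underline{n}$: for each $V$ the word $P_V$ carries $X_{i_j}$ at the positions $j\in V$ and $Y_{i_j}$ elsewhere. This already matches the sum over $V$ in the definition of $F\smiley G$, and it remains to identify $\varphi(P_V)$ with the corresponding summand.

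For fixed $V$, I would group each maximal run of consecutive $Y$-factors (together with the interior coefficients $b$) into a single element of $\mathcal{A}_Y$; these runs are precisely the interpolation blocks $V_1,\cdots,V_{p+1}$ of $V$ for $\underline{n}$, so that
\[
P_V=W_1\,b_{v_1}X_{i_{v_1}}\,W_2\,b_{v_2}X_{i_{v_2}}\cdots W_p\,b_{v_p}X_{i_{v_p}}\,W_{p+1},\qquad \varphi(W_s)=\mu^Y_{i(V_s)}(b_{V_s}),
\]
with $W_s=1$ and $\varphi(W_s)=\mu^Y_\emptyset=1$ when the run $V_s$ is empty. Since $\mathcal{A}_Y$ is higher, each $W_s$ sits at a local maximum, so I can collapse the $W_s$ one at a time (say left to right) by the defining relation of monotone independence; the key closure point is that after collapsing $W_s$ the scalar $\varphi(W_s)\in\mathfrak{B}$ merges neighbouring factors into $\mathcal{A}_X$, so the relation applies again to the next run. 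Using the $\mathfrak{B}$-bilinearity of $\varphi$ to pull $\varphi(W_{p+1})$ out to the right and to absorb each $\varphi(W_s)b_{v_s}$ into the $s$-th argument slot, I would arrive at
\[
\varphi(P_V)=\mu^X_{i(V)}\big(\mu^Y_{i(V_1)}(b_{V_1})b_{v_1},\cdots,\mu^Y_{i(V_p)}(b_{V_p})b_{v_p}\big)\,\mu^Y_{i(V_{p+1})}(b_{V_{p+1}}),
\]
which is exactly the $V$-summand of $(\mu^X\smiley\mu^Y)_{i_1,\cdots,i_n}(b_1,\cdots,b_n)$. Summing over $V$, together with the boundary check $\varphi(P_\emptyset)=\mu^Y_{i_1,\cdots,i_n}(b_1,\cdots,b_n)$ matching the $F_\emptyset G_{i_1,\cdots,i_n}$ convention, would give the theorem.

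I expect the main obstacle to be the bookkeeping of the iterated collapse rather than any conceptual difficulty: one must verify by a short induction on $p$ that each application of the monotone relation leaves an alternating product of $\mathcal{A}_X$- and $\mathcal{A}_Y$-factors to which the relation applies again, and one must track the degenerate cases (empty interpolation blocks, and the first or last position being occupied by $X$) so that they reproduce the $\mu^Y_\emptyset=1$ conventions built into the definition of $\smiley$. These checks are routine but must be carried out carefully to be sure that every term of $\mu^X\smiley\mu^Y$ is produced exactly once.
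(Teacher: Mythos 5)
Your proposal is correct and follows essentially the same route as the paper's proof: expand $\varphi\big(b_1(X_{i_1}+Y_{i_1})\cdots b_n(X_{i_n}+Y_{i_n})\big)$ multilinearly, index the $2^n$ terms by the set $V$ of $X$-positions, and identify each term with the $V$-summand of $(\mu^X\smiley\mu^Y)_{i_1,\cdots,i_n}$ via the interpolation blocks and monotone independence. The only difference is one of detail: the paper asserts the factorization $\varphi(P_V)=\mu^X_{i(V)}\big(\mu^Y_{i(V_1)}(b_{V_1})b_{v_1},\cdots\big)\mu^Y_{i(V_{p+1})}(b_{V_{p+1}})$ in one line, whereas you spell out the iterated collapse of the $Y$-runs (with the correct orientation, $\mathcal{A}_Y$ higher so that $Y$-runs sit at local maxima), which is exactly the verification the paper leaves implicit.
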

\begin{proof}
For simplicity, we assume that $(i_1,\cdots,i_n)=(1,\cdots,n)$ and prove that 
$\mu^{X+Y}_{1,\cdots,n} = (\mu^X \smiley \mu^Y)_{1,\cdots,n}$. The left hand side is  
\[
\begin{split}
\mu^{X+Y}_{1,\cdots,n}(b_1,\cdots,b_n)
&= \varphi(b_1(X_1+Y_1)b_2\cdots b_n(X_n + Y_n)) \\
&= \sum_{Z_i\in\{X_i,Y_i\},1\leq i \leq n}\varphi(b_1Z_1b_2\cdots b_nZ_n).  
\end{split}
\]
For each summand, let us denote by $V$ the positions where $X_i$ are taken, that is, $V=\{i:Z_i=X_i \}$, and write $V=\{v_1, \cdots, v_p\}$, $v_1 <\cdots <v_p$.  By using the interpolation blocks of $V$ for $\underline{n}$, the summand can be written as 
\begin{equation}\label{eq13}
\mu^X_{V}(\mu^Y_{V_1}(b_{V_1})b_{v_1},\cdots,\mu^Y_{V_p}(b_{V_p})b_{v_p})\mu^Y_{V_{p+1}}(b_{V_{p+1}}).
\end{equation}  
Therefore, $\mu^{X+Y}_{1,\cdots,n}(b_1,\cdots,b_n)$ is equal to the summation of (\ref{eq13}) over $V \subset \underline{n}$. By definition, this is equal to $(\mu^X \smiley \mu^Y)_{1,\cdots,n}$. 
\end{proof}

Using Lemma \ref{lem32}, one can define $\mu^{t.X}$ by extending $N$ of $\mu^{N.X}$ to $t \in \real$. Then we obtain differential equations.  
\begin{cor}
Let $X=(X_1,\cdots,X_r)$ be a random vector.  Then
\[
\frac{d}{dt}\mu^{t.X} = \kappa^X\smiley \mu^{t.X} = \mu^{t.X} \star \kappa^X.\]
\end{cor}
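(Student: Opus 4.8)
The plan is to exhibit $(\mu^{t.X})_{t\in\real}$ as a one-parameter semigroup for the operation $\smiley$ and then read off the two differential equations by differentiating in the parameter at $0$, placed once in the first slot and once in the second slot of $\smiley$.

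First I would prove the flow identity
\[
\mu^{(s+t).X} = \mu^{s.X}\smiley\mu^{t.X}, \qquad s,t\in\real .
\]
For $s,t\in\nat$ this is immediate from the Extended Muraki formula (Theorem~\ref{thm11}): writing $(s+t).X=(X^{(1)}+\cdots+X^{(s)})+(X^{(s+1)}+\cdots+X^{(s+t)})$, the two grouped sums are monotone independent over $\mathfrak{B}$ and carry the distributions of $s.X$ and $t.X$, whence $\mu^{(s+t).X}=\mu^{s.X}\smiley\mu^{t.X}$. By Lemma~\ref{lem32} every entry of $\mu^{N.X}$ is a polynomial in $N$, so for fixed indices and fixed $b_1,\dots,b_n$ each entry of both sides is a $\mathfrak{B}$-valued polynomial in $(s,t)$; since they agree on all of $\nat^2$ they agree as polynomials, hence for all real $s,t$. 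This is also exactly what legitimizes the notation $\mu^{t.X}$.

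Next I would identify the generator $\frac{d}{ds}\big|_{s=0}\mu^{s.X}=\kappa^X$. Because $X\mapsto X^{(j)}$ is a $\mathfrak{B}$-homomorphism we have $N.(b X_i)=b\,(N.X_i)$, so
\[
\mu^{N.X}_{i_1,\dots,i_n}(b_1,\dots,b_n)=\varphi\big((N.(b_1X_{i_1}))\cdots(N.(b_nX_{i_n}))\big),
\]
whose coefficient of $N^{1}$ is, by definition of the cumulant, $K_n(b_1X_{i_1},\dots,b_nX_{i_n})=\kappa^X_{i_1,\dots,i_n}(b_1,\dots,b_n)$; its constant term vanishes since the $a_\pi$ of Lemma~\ref{lem32} have none, in agreement with $\mu^{0.X}=\I$. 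Differentiating the flow identity in $s$ at $s=0$ then gives the first equation at once: the factor $F=\mu^{t.X}$ occupies the first slot of $\smiley$, and since $F\mapsto F\smiley G$ is linear (additive by the proposition on $\smiley$, and each summand of Definition~\ref{defi11}(1) contains the single factor $F_{i(V)}$ linearly), differentiation yields $\frac{d}{dt}\mu^{t.X}=\big(\frac{d}{ds}\big|_{0}\mu^{s.X}\big)\smiley\mu^{t.X}=\kappa^X\smiley\mu^{t.X}$.

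For the second equation I would instead differentiate $\mu^{(t+s).X}=\mu^{t.X}\smiley\mu^{s.X}$ in $s$ at $s=0$, where now the moving argument occupies the second slot and $\mu^{0.X}=\I$. The decisive lemma, as anticipated in the remark after Definition~\ref{defi11}, is that for any curve $G(s)$ in $\mulb$ with $G(0)=\I$ one has $\frac{d}{ds}\big|_{0}(F\smiley G(s))=F\star G'(0)$. To see this I would expand $(F\smiley G(s))_{i_1,\dots,i_n}$ as in Definition~\ref{defi11}(1) and apply the product rule to the factors $G_{i(V_1)},\dots,G_{i(V_{p+1})}$ attached to the interpolation blocks of each $V$; since $G_\emptyset=1$ and $G_{i_1,\dots,i_k}=0$ for $k\geq1$ at $s=0$, a summand survives only when exactly one interpolation block $W$ is nonempty, carrying the derivative $G'_{i(W)}$, while every other interpolation block is empty and contributes $1$. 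Such $V$ are precisely the complements $W^{c}$ of interval blocks $W\in IB(n)$, and the surviving term is $F_{i(W^{c})}(b_1,\dots,\,G'_{i(W)}(b_W)b_{k+l+1},\dots,b_n)$, which is exactly the summand of $F\star G'(0)$; the boundary cases $k=1$ and $k+l=n$ match the corresponding conventions in the definition of $\star$. Applying this with $F=\mu^{t.X}$ and $G'(0)=\kappa^X$ gives $\frac{d}{dt}\mu^{t.X}=\mu^{t.X}\star\kappa^X$.

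The main obstacle is the combinatorial bookkeeping of this last step: verifying that the only surviving derivatives of the $\smiley$-expansion at $\I$ are those indexed by a single interval block, and that they reassemble precisely into the $\star$-sum, boundary cases included. The regrouping of monotone-independent copies in the first step, though routine, also relies on the associativity of monotone independence under contiguous grouping.
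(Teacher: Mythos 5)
Your proposal takes essentially the same route as the paper: derive the flow identity $\mu^{(s+t).X}=\mu^{s.X}\smiley\mu^{t.X}$ from Theorem~\ref{thm11} applied to the two contiguous groups of copies, extend it from $\nat$ to $\real$ via the polynomiality guaranteed by Lemma~\ref{lem32}, identify $\frac{d}{ds}\big|_{0}\mu^{s.X}=\kappa^X$, and differentiate once in each slot of $\smiley$. The only substantive difference is that you prove, rather than merely invoke, the fact (alluded to in the remark after Definition~\ref{defi11}) that differentiating the second slot at $\I$ produces the operation $\star$ --- a detail the paper's four-sentence proof leaves implicit.

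One caveat on that auxiliary lemma: as you state it --- for \emph{any} curve $G(s)$ with $G(0)=\I$ one has $\frac{d}{ds}\big|_{0}\bigl(F\smiley G(s)\bigr)=F\star G'(0)$ --- it is false. Your product-rule analysis overlooks the terms in which the derivative falls on a factor $G_\emptyset$ attached to an \emph{empty} interpolation block. Such a term survives at $s=0$ precisely when every other interpolation block is also empty, i.e.\ when $V=\underline{n}$, and it contributes $\sum_{j=1}^{n}F_{i_1,\cdots,i_n}(b_1,\cdots,G'(0)_\emptyset\, b_j,\cdots,b_n)+F_{i_1,\cdots,i_n}(b_1,\cdots,b_n)\,G'(0)_\emptyset$, which is not part of $F\star G'(0)$, since $IB(n)$ consists only of nonempty intervals (so $\star$ never sees $G'(0)_\emptyset$ for $n\geq 1$). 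The lemma therefore needs the additional hypothesis $G'(0)_\emptyset=0$. This costs nothing in your application: $\mu^{s.X}_\emptyset\equiv 1_\mathfrak{B}$ is constant in $s$ (matching $\kappa^X_\emptyset=0$), so the extra terms vanish and the proof of the corollary stands; but the hypothesis should be stated explicitly, or the lemma restricted to curves with constant $\emptyset$-component.
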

\begin{proof}
The equality $\mu^{(M+N).X} = \mu^{N.X} \smiley \mu^{M.X}$ follows from Theorem \ref{thm11}, with $X$ replaced by $X^{(1)}+\cdots+X^{(N)}$ and $Y$ by $X^{(N+1)}+\cdots+X^{(N+M)}$. Lemma \ref{lem32} implies that $\mu^{N.X}_{i_1,\cdots,i_n}$ is a polynomial of $N$, so that this is an identity as polynomials with respect to $N$ and $M$. Therefore, $N$ and $M$ can be replaced by real numbers $t$ and $s$, respectively. The derivatives $\frac{d}{dt}|_0$ and $\frac{d}{ds}|_0$ then yield the first and second identities, respectively.  
\end{proof}
The above two differential equations can be used to calculate monotone cumulants from moments. If $\mathfrak{B}=\comp$ and $r=1$, the above relations just coincide with 
\[
\frac{d}{dt}F^X_t(z) = A^X(F_t^X(z)) = A^X(z)\frac{\partial F_t^X}{\partial z}(z), 
\]  
where $F^X_t(z)$ is the reciprocal Cauchy transform of a `formal convolution semigroup' associated to $X$, and $A^X(z)=-\sum_{n=1}^\infty\frac{K_n(X)}{z^{n-1}}$ is a generating function of monotone cumulants. The reader is referred to the last remark of \cite{H-S2} for details.

\section*{Acknowledgements} TH thanks Mihai Popa for many discussions which improved his understanding of operator-valued independence. TH is supported by Global COE Program at Kyoto University.

\end{document}